\newtheorem{lem}{Lemma}[section]
\newtheorem{cor}[lem]{Corollary}
\newtheorem{teo}[lem]{Theorem}
\newtheorem{os}[lem]{Remark}
\newtheorem{prop}[lem]{Proposition}
\newenvironment{proof}{{\sc{Proof.}}}{\hfill\qed}
\newcommand{\qed}{\thinspace\null\nobreak\hfill\hbox{\vbox{\kern-.2pt\hrule
			height.2pt depth.2pt\kern-.2pt\kern-.2pt \hbox to2.5mm{\kern-.2pt\vrule
				width.4pt \kern-.2pt\raise2.5mm\vbox to.2pt{}\lower0pt\vtop
				to.2pt{}\hfil\kern-.2pt \vrule
				width.4pt \kern-.2pt}\kern-.2pt\kern-.2pt\hrule height.2pt depth.2pt
			\kern-.2pt}}\par\medbreak}
\newcommand{\R}{\mathbb{R}}
\newcommand{\C}{\mathbb{C}}
\newcommand{\eps}{\varepsilon}
\newcommand{\HS}{\R^{N+1}_+}
\newcommand{\ov}{\overline}
\def\Xint#1{\mathchoice
	{\XXint\displaystyle\textstyle{#1}}%
	{\XXint\textstyle\scriptstyle{#1}}%
	{\XXint\scriptstyle\scriptscriptstyle{#1}}%
	{\XXint\scriptscriptstyle\scriptscriptstyle{#1}}%
	\!\int}
\def\XXint#1#2#3{{\setbox0=\hbox{$#1{#2#3}{\int}$}
		\vcenter{\hbox{$#2#3$}}\kern-.5\wd0}}
\def\fint{\Xint -}
\newcommand{\ds}{\displaystyle}
\date{}
\begin{document}

	\title{
		Harnack inequality for  Bessel operators}
	\author{G. Metafune \thanks{Dipartimento di Matematica e Fisica ``Ennio De Giorgi'', Universit\`a del Salento, C.P.193, 73100, Lecce, Italy.
			e-mail:  giorgio.metafune@unisalento.it} \qquad L. Negro \thanks{Dipartimento di Matematica e Fisica ``Ennio De Giorgi'', Universit\`a del Salento, C.P.193, 73100, Lecce, Italy.
			e-mail:  luigi.negro@unisalento.it} \qquad C. Spina \thanks{Dipartimento di Matematica e Fisica  ``Ennio De Giorgi'', Universit\`a del Salento, C.P.193, 73100, Lecce, Italy.
			e-mail:  chiara.spina@unisalento.it}}
	
	\maketitle
	\begin{abstract}
		\noindent 
		We prove  uniqueness results and Harnack inequality for Bessel operators
		\begin{align*}
			D_t-\Delta_{x} -2a\cdot\nabla_xD_y- D_{yy}- \frac cy D_y 
		\end{align*}
		in the strip $[0,T]\times \R^{N+1}_+=\{0 \leq t \leq T,  x \in \R^N, y>0\}$ under Neumann boundary conditions at $y=0$.
		
		\bigskip\noindent
		Mathematics subject classification (2020): 35K08, 35K67,  47D07, 35J70, 35J75.
		\par

		\noindent Keywords: degenerate elliptic operators, singular elliptic operators, boundary degeneracy, kernel estimates.
	\end{abstract}

	\section{Introduction}
A classical result attributed to Tychonov  shows uniqueness of the heat equation $u_t=\Delta u$ under the pointwise growth condition $|u(t,x)| \leq Ce^{b|x|^2}$, $t>0,\ x \in \R^N$. More general results under integral growth assumptions, related to the decay of the heat kernel, have been proved later and lead to Widder's uniqueness theorem for positive solutions, see \cite{Widder}. A precise Harnack inequality for positive global solutions of the heat equation is then deduced by the explicit expression of the heat kernel.

In \cite{Friedman}, the author extends all these results to general parabolic operators with smooth, time dependent, coefficients. He uses the decay of the  fundamental solution of the operator and its adjoint and of some of their derivatives, to show uniqueness by integrating by parts.

In this paper we prove all these results for the singular parabolic operator $D_t-\mathcal L$ where 
\begin{align}\label{intro model operator}
		\mathcal L =\Delta_{x} +2a\cdot\nabla_xD_y+ D_{yy}+ \frac cy D_y 
	\end{align}
in the half-space $\R^{N+1}_+=\{(x,y): x \in \R^N, y>0\}$, under Neumann boundary conditions at $y=0$. We always assume  $c+1>0$  and  $a\in\R^N$ with $|a| <1$, which is equivalent to the ellipticity of the top order coefficients.

We denote by 
 $B_y$ the Bessel operator $D_{yy} +\frac{c}{y}D_y$ so that $\mathcal L=\Delta_x+2a\cdot\nabla_xD_y+B_y$ and $\mathcal L$ itself is named a Bessel operator. More general operators are easily obtained from $\mathcal L$ by linear change of variables, see \cite[Section 8]{MNS-Singular-Half-Space}.

Harnack inequality for Bessel operators has been studied in \cite{Garofalo1} in the case $a=0$ and $c \geq 0$. The author finds the very precise estimate for positive solutions
\begin{align*}
			0 <u(s,z_2)< \,  u(t,z_1) \left(\frac t s\right)^{\frac {N+1+c} 2}\exp\left({\frac{| z_1-z_2|^2}{4(t-s)}}\right),\quad  0<s<t,\quad  z_1,z_2\in\R^{N+1}_+
		\end{align*}
using specific properties of Bessel functions and the explicit knowledge of the heat kernel. When $ a \neq 0$, the heat kernel is not explicitly known and we use two-sided estimates which lead to a generic (but computable) constant $C$ instead of $\frac 14$. On the other hand, our methods work also for $-1<c<0$.
We prove, in all cases, 
\begin{align*}
			0<u(s,z_2)\leq \, C u(t,z_1) \left(\frac t s\right)^{\frac {N+1+c^+} 2}\exp\left({C\frac{| z_1-z_2|^2}{t-s}}\right),\quad  0<s<t,\quad  z_1,z_2\in\R^{N+1}_+.
		\end{align*}
In contrast with the results in \cite{Garofalo1}, we prove that our estimate holds for every positive solution of $D_tu-\mathcal L u=0$ and not only for the semigroup solution. This follows from our Widder-type result: all positive solutions are semigroup solutions.\\

We  now briefly outline our method and compare it with the existing literature.	
	
	The uniqueness problem for parabolic operators beyond the class studied in \cite{Friedman} attracted the interest of many authors.  We mention for example \cite{Pinchover} and \cite{Kogoj-Pinchover-Polidoro} where the authors considered respectively a class of parabolic operators with unbounded coefficients and some hypoelliptic  operators and proved uniqueness for the Cauchy problem after proving the validity of  uniform Harnack inequalities. Also in \cite{Ishige-Murata}  the authors study the uniqueness of nonnegative solutions of the Cauchy problem for parabolic equations in divergence form on manifolds or domains. In particular they prove  uniqueness under integral growth assumptions and uniqueness for nonnegative solutions via the parabolic Harnack inequality. Since we have sharp kernel estimates and parabolic regularity, our approach is the reverse of \cite{Ishige-Murata}. Indeed we first prove uniqueness under pointwise growth condition as in the classical case, by constructing a barrier,  then we convert integral bounds into pointwise bounds, by the parabolic regularity in \cite{dong2020neumann}. Finally,  we prove uniqueness results for nonnegative solution by using the lower bounds of the parabolic kernel. As a consequence, we get a  Harnack inequality for nonnegative solutions by using upper and lower bounds on the kernel.
	
In the last section we deduce Liouville type theorems for elliptic and parabolic equations. Liouville type theorems for the operators considered in this paper have already been studied in literature  in the case $a=0$, that is when mixed second order derivatives do not appear in the operator. See, for example, \cite[Theorem 1.2]{AudritoFioravantiVita2}, \cite[Theorem 1.6]{TerraciniTortoneVita1}.


	\bigskip
	\noindent\textbf{Notation.} For $N \ge 0$, $\R^{N+1}_+=\{z=(x,y): x \in \R^N, y>0\}$. We write $\nabla u, D^2 u$ for the gradient and the Hessian matrix of a function $u$ with respect to all $x,y$ variables and $\nabla_x u, D_y u, D_{x_ix_j }u, D_{x_i y} u$ and so on, to distinguish the role of $x$ and $y$.  For $m \in \R$ we consider the measure $y^m dx dy $ in $\R^{N+1}_+$ and  we write $L^p_m(\R_+^{N+1})$, and often only $L^p_m$ when $\R^{N+1}_+$ is understood, for  $L^p(\R_+^{N+1}; y^m dx dy)$. We use $d\mu(z)$ for the measure $y^c\, dx\, dy$, $z=(x,y) \in \R_+^{N+1}$ .

	Given $a$ and $b$ $\in\R$, $a\wedge b$, $a \vee b$  denote  their minimum and  maximum. We  write $f(x)\simeq g(x)$ for $x$ in a set $I$ and positive $f,g$, if for some $C_1,C_2>0$ 
	\begin{equation*}
		C_1\,g(x)\leq f(x)\leq C_2\, g(x),\quad x\in I.
	\end{equation*}

	\medskip
	\noindent\textbf{Acknowledgment.}
	The authors are members of the INDAM (``Istituto Nazionale di Alta Matematica'') research group GNAMPA (``Gruppo Nazionale per l’Analisi Matematica, la Probabilità e le loro Applicazioni'').

	\section{The  operator}\label{section general operator}
Let $c\in\R$, $a=(a_1, \dots, a_N) \in\R^N$ such that $c+1>0$, $|a|<1$.	We consider the singular elliptic operator
\begin{equation}  \label{La}
	\mathcal L :=\Delta_{x} +2\sum_{i=1}^Na_{i}D_{x_iy}+D_{yy}+\frac{c}{y}D_y=\Delta_x u+2a\cdot \nabla_xD_yu+ B_yu 
\end{equation} 
 endowed  with   the Neumann  boundary conditions 
	\begin{align}\label{Bound Cond}
		\lim_{y\to 0}y^{c}\, D_y u=0.
	\end{align}
	 The requirement $|a| <1$ is equivalent to the ellipticity of the top order coefficients.  	We recall the  results about generation of semigroups, maximal regularity and domain characterization  proved in \cite{MNS-Singular-Half-Space, Negro-AlphaDirichlet}. 
	We work in the spaces $L^p_m(\R_+^{N+1}):=L^p(\R_+^{N+1}; y^m dx dy)$, where $m\in\R$ and write only $L^p_m$ when $\R^{N+1}_+$ is understood and we recall that $d\mu(z)=y^c\, dx\, dy$.
	
	The Neumann  boundary condition \eqref{Bound Cond}   can be equivalently written in a integral form, 
	see  \cite[Proposition 4.6]{MNS-Sobolev}.  We define accordingly the  Sobolev spaces
	\begin{align*}
		W^{2,p}(m)=&\left\{u\in W^{2,p}_{loc}(\R^{N+1}_+):\    u, \nabla u, D^2u \in L^p_m\right\}
	\end{align*}
	and impose the boundary conditions by introducing
	\begin{align}\label{Definition W_N alpha}
		W^{2,p}_{\mathcal N}(m)&{=}\{u \in W^{2,p}(m):\ y^{-1}D_yu \in L^p_m\}.
	\end{align}

	\begin{teo}\label{Teorema generazione}{{\em(\cite[Theorem 7.7]{MNS-Singular-Half-Space})}}\label{Teo gen in Lpm}
		Let $0<\frac{m+1}p<c+1$. 		Then the operator
		\begin{align*}
			\mathcal L&=\Delta_x u+2a\cdot \nabla_xD_yu+ B_yu 
		\end{align*}
		endowed with domain 	$D(\mathcal L)=W^{2,p}_{\mathcal N}(m)$	generates a bounded analytic semigroup $(e^{t\mathcal L})_{t \geq 0}$  in $L^p_m$.
	\end{teo}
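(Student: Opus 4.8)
The plan is to obtain the generation result for $\mathcal L$ in the weighted space $L^p_m$ by reducing it, via two elementary transformations, to a case already established in the cited literature. The first reduction removes the mixed derivatives: since $|a|<1$, the linear change of variables $x=x'+a\,y$ (more precisely $z=(x,y)\mapsto(x-ay,y)$) turns $\Delta_x+2a\cdot\nabla_xD_y+D_{yy}$ into $(1-|a|^2)$ times a nondegenerate second-order operator in the new variables plus $B_y$ in the $y$ variable, because the Bessel part $\frac cy D_y$ is unaffected by a shift of the $x$-variables and the boundary condition \eqref{Bound Cond} is preserved (it only involves $y^cD_yu$, and $D_y$ in the new coordinates differs from the old one by an $x$-derivative that does not affect the limit at $y=0$). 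After this, one is left with an operator of the form $\Delta_{x'}+B_y$ (up to the harmless constant $(1-|a|^2)>0$, absorbed by rescaling time), which is exactly the $a=0$ case.

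Next I would record that the weight $y^m\,dx\,dy$ and the measure $d\mu=y^c\,dx\,dy$ are related by the factor $y^{m-c}$, and that the condition $0<\frac{m+1}{p}<c+1$ is precisely the range in which the Bessel operator $B_y$ with Neumann condition \eqref{Bound Cond} is known to generate a bounded analytic semigroup on $L^p(\R_+;y^m\,dy)$; this is the one-dimensional result underlying \cite[Theorem 7.7]{MNS-Singular-Half-Space}. The operator $\Delta_{x'}$ generates the (bounded analytic) heat semigroup on $L^p(\R^N)$ for every $p\in(1,\infty)$. Since the two act on independent groups of variables and commute, the closure of their sum generates the tensor-product semigroup $e^{t\Delta_{x'}}\otimes e^{tB_y}$ on $L^p(\R^N)\otimes L^p(\R_+;y^m\,dy)=L^p_m(\R^{N+1}_+)$, and this semigroup is bounded analytic because both factors are and analyticity is stable under such tensoring (the generator of the product is sectorial of the same angle, by e.g. the transference/Dore--Venni type argument, or directly from the product structure of the resolvent). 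Transporting back through the change of variables, which is an isomorphism of $L^p_m$ onto itself up to an equivalent norm, yields that $\mathcal L$ generates a bounded analytic semigroup on $L^p_m$.

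Finally, the domain identification $D(\mathcal L)=W^{2,p}_{\mathcal N}(m)$ has to be checked: one shows that on $W^{2,p}_{\mathcal N}(m)$ the graph norm of $\mathcal L$ is equivalent to the $W^{2,p}(m)$-norm augmented by $\|y^{-1}D_yu\|_{L^p_m}$. This again splits: the $x'$-regularity comes from classical Calder\'on--Zygmund estimates for $\Delta_{x'}$ applied fiberwise, while the $y$-regularity, including the crucial term $y^{-1}D_yu\in L^p_m$ encoding the Neumann condition, comes from the one-dimensional maximal regularity for $B_y$ established in \cite{MNS-Singular-Half-Space, Negro-AlphaDirichlet}; the mixed derivatives $D_{x_iy}u$ are then controlled by interpolation between the pure $x'$ and pure $y$ estimates.

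The main obstacle I anticipate is not the abstract semigroup theory but the domain characterization under the change of variables: one must verify that the transformation $u(x,y)\mapsto u(x-ay,y)$ maps $W^{2,p}_{\mathcal N}(m)$ onto the analogous space for the transformed operator, i.e.\ that it preserves both the weighted Sobolev regularity and the integral form of the Neumann condition. The Sobolev part is a routine (if slightly tedious) chain-rule computation showing that the mixed derivative $D_{x_iy}$ introduced by the shift lands in $L^p_m$ precisely when $u\in W^{2,p}(m)$; the boundary-condition part requires the equivalent integral formulation of \eqref{Bound Cond} from \cite[Proposition 4.6]{MNS-Sobolev}, since the pointwise limit $\lim_{y\to0}y^cD_yu$ is not obviously well-behaved under the change of variables, whereas the integral condition transforms cleanly. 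Since all these ingredients are exactly what \cite[Theorem 7.7]{MNS-Singular-Half-Space} assembles, the proof here is mainly a matter of citing that theorem after the reduction; I would therefore keep the argument short and point to the reference for the detailed verifications.
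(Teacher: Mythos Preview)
The paper gives no proof of this statement: Theorem~\ref{Teorema generazione} is simply quoted from \cite[Theorem 7.7]{MNS-Singular-Half-Space} and used as a black box. So there is nothing to compare your argument against in this paper.

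That said, your sketch is a reasonable outline of how the cited result is actually obtained in \cite{MNS-Singular-Half-Space}: one does reduce to the case $a=0$ by a linear change of variables (this is exactly what the paper alludes to when it says ``more general operators are easily obtained from $\mathcal L$ by linear change of variables, see \cite[Section 8]{MNS-Singular-Half-Space}''), and then the generation and domain characterization for $\Delta_x+B_y$ rest on the one-dimensional theory for $B_y$ combined with the heat semigroup in $x$. Your identification of the delicate point---that the change of variables must be shown to preserve $W^{2,p}_{\mathcal N}(m)$ and the integral Neumann condition---is correct, and is indeed handled in the cited references. For the purposes of \emph{this} paper, however, a one-line citation suffices; no argument is expected.
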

$\mathcal L$ has maximal regularity, that is if $u_t-\mathcal Lu=f$ with $u(0)=0$ and $f \in L^p$, then $u, \mathcal Lu \in L^p$.
	
\medskip

	In \cite{Negro-Spina-SingularKernel, Negro-Spina-SingularKernel-Lower} we proved  that, if  $c +1>0$,  then the  heat kernel $p_{{\mathcal L}}$ of $\mathcal L$, written  with respect the measure $y^\frac{c}{\gamma}dz$, satisfies sharp estimates.  We recall here the main results.

	\begin{teo}{{\em(\cite[Theorems 4.18]{Negro-Spina-SingularKernel}, \cite[Theorem 7.16]{Negro-Spina-SingularKernel-Lower})}}\label{true.kernel} 
		Let $c+1>0$.  The semigroup   consists of integral operators i.e. there exists $p_{{\mathcal L}}(t,\cdot,\cdot)\in L^\infty(\R^{N+1}_+\times\R^{N+1}_+)$ 
		such that  for $t> 0$, $z_1=(x_1,y_1),\ z_2=(x_2,y_2)\in\R^{N+1}_+$
		\begin{align*}
			e^{t{\mathcal  L}}f(z_1)=
			\int_{\R^{N+1}_+}p_{{\mathcal L}}(t,z_1,z_2)f(z_2)\,d\mu(z_2),\quad f\in L^2_{{c}}.
		\end{align*}
		Moreover $p_{{\mathcal  L}}$ satisfies
		\begin{align}\label{up low section 1}
			p_{{\mathcal  L}}(t,z_1,z_2)
			\simeq C t^{-\frac{N+1}{2}} y_1^{-\frac{c}{2}} \left(1\wedge \frac {y_1}{\sqrt t}\right)^{\frac{c}{2}} y_2^{-\frac{c}{2}} \left(1\wedge \frac{y_2}{\sqrt t}\right)^{\frac{c}{2}}\,\exp\left(-\dfrac{|z_1-z_2|^2}{kt}\right),
		\end{align}
		where  $C,k$ are some positive constants which may differ in the upper and lower bounds. 
	\end{teo}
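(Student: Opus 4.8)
The plan is to derive the Gaussian kernel estimate \eqref{up low section 1} from the known two-sided estimates for the one-dimensional Bessel heat kernel together with the tensorisation structure of $\mathcal L$ hinted at by the decomposition $\mathcal L = \Delta_x + 2a\cdot\nabla_x D_y + B_y$. First I would recall (from \cite{Negro-Spina-SingularKernel}) that the semigroup generated by $B_y$ on the half-line, written with respect to $y^c\,dy$, has a kernel $p_{B}(t,y_1,y_2)$ that is explicitly expressible through the modified Bessel function $I_\nu$ with $\nu=(c-1)/2$, and that the standard asymptotics of $I_\nu$ yield the sharp two-sided bound
\begin{align*}
p_{B}(t,y_1,y_2)\simeq C\,t^{-\frac12}\,y_1^{-\frac c2}\Bigl(1\wedge\frac{y_1}{\sqrt t}\Bigr)^{\frac c2} y_2^{-\frac c2}\Bigl(1\wedge\frac{y_2}{\sqrt t}\Bigr)^{\frac c2}\exp\Bigl(-\frac{|y_1-y_2|^2}{kt}\Bigr),
\end{align*}
valid for all $c+1>0$ (including $-1<c<0$), where the prefactor $y^{-c/2}(1\wedge y/\sqrt t)^{c/2}$ arises precisely from the $I_\nu$-asymptotics in the regimes $y\lesssim\sqrt t$ and $y\gtrsim\sqrt t$. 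The existence of the kernel and its membership in $L^\infty$ then follows because the bound above is integrable against $d\mu$ and the semigroup is consistent across the $L^p_m$-scale from Theorem~\ref{Teorema generazione}.

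Next I would handle the full operator in $\R^{N+1}_+$. Since the coefficients are constant, a linear change of variables in the $x$-block removes the mixed term: setting $\tilde x = Rx$ for a suitable invertible $R$ (using $|a|<1$ to guarantee positive-definiteness of the top-order matrix, cf.\ \cite[Section 8]{MNS-Singular-Half-Space}), $\mathcal L$ becomes $\Delta_{\tilde x} + B_y$ up to a constant rescaling of time, so its heat kernel factorises as the product of the $N$-dimensional Gauss–Weierstrass kernel in $\tilde x$ and $p_B$ in $y$. Undoing the change of variables distorts $|x_1-x_2|^2$ into a comparable quadratic form (because $R$ is a fixed invertible matrix), which is absorbed into the generic constant $k$; combining with the Gaussian factor in $y$ and using $|z_1-z_2|^2 \simeq |x_1-x_2|^2 + |y_1-y_2|^2$ gives exactly \eqref{up low section 1}, with the constants $C,k$ necessarily different in the upper and lower bounds since the $x$- and $y$-Gaussians have to be merged. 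The identification of this tensor kernel with $p_{\mathcal L}$ on $L^2_c$ is immediate from uniqueness of the semigroup and the fact that the product kernel generates the product semigroup $e^{t\Delta_{\tilde x}}\otimes e^{tB_y}$.

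The main obstacle — and the place where the cited papers do the real work — is establishing the \emph{lower} bound, i.e.\ the $\gtrsim$ direction of the estimate for $p_B$ (hence \cite[Theorem 7.16]{Negro-Spina-SingularKernel-Lower} is cited separately from the upper bound). The upper bound for $p_B$ follows fairly directly from the $I_\nu$-asymptotics and a crude splitting of the exponential, but the lower bound requires uniform control of $I_\nu(r)e^{-r}$ from below for all $r>0$, careful treatment of the near-boundary regime $y_i\to 0$ where the weight $y^{-c/2}$ is singular when $c>0$ and vanishes when $c<0$, and a chaining/Harnack argument to propagate the on-diagonal lower bound to the off-diagonal Gaussian tail. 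In the present paper, however, all of this is quoted, so the proof here is essentially the reduction described above: recall the $1$-D Bessel estimate, diagonalise the $x$-block, tensorise, and absorb constants. I would therefore present the argument at the level of ``By \cite[Theorems 4.18]{Negro-Spina-SingularKernel} and \cite[Theorem 7.16]{Negro-Spina-SingularKernel-Lower}, together with the factorisation after the linear change of variables of \cite[Section 8]{MNS-Singular-Half-Space}, \ldots'', spelling out only the bookkeeping of the constants.
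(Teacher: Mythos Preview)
The paper does not prove this theorem at all: it is quoted verbatim from \cite{Negro-Spina-SingularKernel,Negro-Spina-SingularKernel-Lower} as background, so there is no ``paper's own proof'' to compare against beyond the citation.

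That said, your proposed argument has a genuine gap. The key step --- ``a linear change of variables in the $x$-block removes the mixed term: setting $\tilde x = Rx$ \ldots\ $\mathcal L$ becomes $\Delta_{\tilde x}+B_y$'' --- is false when $a\neq 0$. A substitution $\tilde x = Rx$ leaving $y$ untouched acts only on the $N\times N$ upper block of the principal symbol $\bigl(\begin{smallmatrix} I & a^t\\ a & 1\end{smallmatrix}\bigr)$; the off-diagonal entry $a$ survives as $(R^{-1})^t a$ and the cross term $2a\cdot\nabla_x D_y$ persists. If instead you try a shear $\tilde x = x - a y$ (the natural way to kill the cross term), then $D_y u = -a\cdot\nabla_{\tilde x}v + D_y v$, so the drift $\dfrac{c}{y}D_y$ produces an additional singular first-order piece $-\dfrac{c}{y}\,a\cdot\nabla_{\tilde x}$. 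The resulting operator is of the general form \eqref{general operator def} with $d\neq 0$, \emph{not} a tensor product $\Delta_{\tilde x}+B_y$, and the factorisation $e^{t\Delta_{\tilde x}}\otimes e^{tB_y}$ is unavailable.

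This is precisely why the introduction stresses that ``when $a\neq 0$, the heat kernel is not explicitly known'': if your reduction worked, the kernel \emph{would} be explicit as a product of the Gaussian and the Bessel kernel. The cited papers obtain \eqref{up low section 1} by other means (weighted Nash-type inequalities, Davies' perturbation, and a separate argument for the lower bound), not by tensorisation. Your sketch is correct only in the special case $a=0$.
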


	\begin{os}\label{oss equiv estimate}
		We emphasize that the above estimate can be written  equivalently  as
		\begin{align}\label{upper estimates ver2}
			p_{{\mathcal  L}}(t,z_1,z_2)
			\simeq C t^{-\frac{N+1}{2}}  y_i^{-c} \left(1\wedge \frac{y_i}{\sqrt t}\right)^{c}\,\exp\left(-\dfrac{|z_1-z_2|^2}{kt}\right),\qquad i=1,2
		\end{align}
		for some possibly different constants $C,k>0$. This is a consequence of \cite[Lemma 10.2]{MNS-Caffarelli} which says that for any $\epsilon>0$ there are $C_1,C_2>0$ such that for every $ y_1,y_2>0$
		\begin{align}\label{equiv estimates up low}
			C_1\,\exp\left(-\epsilon|y_1-y_2|^2\right)\leq \frac{y_1^{-\frac{c}2} \left(1\wedge y_1\right)^{\frac{c}2}}{y_2^{-\frac{c}2} \left(1\wedge y_2\right)^{\frac{c}2}}\leq C_2\,\exp\left(\epsilon|y_1-y_2|^2\right).
		\end{align}
	\end{os}
	\medskip

	\subsection{Remarks on the measure $d\mu(z)=y^c\,dx\,dy$}

	We work in $\R^{N+1}_+$ endowed with the standard euclidean metric  and, for $z_0=(x_0,y_0)\in\overline{\R^{N+1}_+}$ and $r>0$, we consider the cylindric balls 
	\begin{align}\label{def balls}
		Q(z_0,r):=\Big(B(x_0,r)\times ]y_0-r,y_0,y_0+r[\Big)\cap \HS=B(x_0,r)\times ](y_0-r)^+,y_0,y_0+r[.
	\end{align} 
	
	For $c+1>0$ we consider the measure $d\mu=y^c\,dx\,dy$ and we write 
	\begin{align*}
		V (z_0,r):=\mu\left(Q(z_0,r)\right)=\int_{Q(z_0,r)} y^c dz=w_Nr^{N}\int_{(y_0-r)^+}^{y_0+r} y^c dy.
	\end{align*}

	We need the following  elementary lemma, see \cite[Lemma 5.2]{MNS-PerturbedBessel}, which implies that $\mu$ is doubling. 
	
	\begin{lem}\label{Misura palle}
		Let $c+1>0$. 
		Then one has 
		\begin{align*}
			V(z_0,r)&=r^{N+1+c}\,V\left(\frac{z_0}r,1\right),\qquad 
			V(z_0,r)\simeq r^{N+1+c}\left(\frac{y_0}{r}\right)^{c}\left(\frac{y_0}{r}\wedge 1\right)^{-c}.
		\end{align*}
		In particular the function $V$ satisfies, for some  constants $C\geq 1$, the doubling condition
			\begin{align*}
			\frac{V(z_0,s)}{V(z_0,r)}\leq C \left(\frac{s}{r}\right)^{N+1+c^+},\qquad \forall\, 0<r\leq s.
		\end{align*}
	\end{lem}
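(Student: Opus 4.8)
\emph{Proof proposal.} The plan is to reduce to the case $r=1$ by scaling, estimate the one–dimensional integral $\int_{(y_0-1)^+}^{y_0+1}\eta^c\,d\eta$ by elementary means, and then read off the doubling property. First, in $V(z_0,r)=w_Nr^N\int_{(y_0-r)^+}^{y_0+r}y^c\,dy$ I would substitute $y=r\eta$; since $((y_0-r)^+)/r=((y_0/r)-1)^+$ this gives $\int_{(y_0-r)^+}^{y_0+r}y^c\,dy=r^{c+1}\int_{((y_0/r)-1)^+}^{(y_0/r)+1}\eta^c\,d\eta$ (all integrals convergent because $c+1>0$), hence $V(z_0,r)=r^{N+1+c}V(z_0/r,1)$, which is the first identity.

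It then suffices to show $g(\beta):=\int_{(\beta-1)^+}^{\beta+1}\eta^c\,d\eta\simeq \beta^c(\beta\wedge1)^{-c}=(\beta\vee1)^c$ for all $\beta\ge0$ and to apply the scaling with $\beta=y_0/r$. For $0\le\beta\le1$ one has $g(\beta)=\int_0^{\beta+1}\eta^c\,d\eta=(\beta+1)^{c+1}/(c+1)$ with $\beta+1\in[1,2]$, so $g(\beta)\simeq1=(\beta\vee1)^c$. For $\beta\ge2$, every $\eta\in[\beta-1,\beta+1]$ lies in $[\beta/2,2\beta]$, so $\eta^c\simeq\beta^c$ there and, integrating over an interval of length $2$, $g(\beta)\simeq\beta^c$. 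For $1\le\beta\le2$, $g$ is continuous and strictly positive (because $t\mapsto t^{c+1}$ is increasing, so $(\beta+1)^{c+1}-((\beta-1)^+)^{c+1}>0$), hence $g(\beta)\simeq1\simeq\beta^c$ on this compact interval. Combining the three cases and invoking the scaling gives the second estimate $V(z_0,r)\simeq r^{N+1+c}(y_0/r)^c((y_0/r)\wedge1)^{-c}$.

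For the doubling bound, set $\phi(\beta)=(\beta\vee1)^c$, so that $V(z_0,\rho)\simeq\rho^{N+1+c}\phi(y_0/\rho)$ and, for $0<r\le s$,
\[
\frac{V(z_0,s)}{V(z_0,r)}\simeq\Big(\frac sr\Big)^{N+1+c}\frac{\phi(y_0/s)}{\phi(y_0/r)}=\Big(\frac sr\Big)^{N+1}\Big(\frac{y_0\vee s}{y_0\vee r}\Big)^{c},
\]
the last identity using $\rho\big((y_0/\rho)\vee1\big)=y_0\vee\rho$. If $c\ge0$, then $c^+=c$ and $\phi$ is nondecreasing, so $\phi(y_0/s)\le\phi(y_0/r)$ and the middle expression is $\le C(s/r)^{N+1+c}$. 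If $-1<c<0$, then $c^+=0$ and $y_0\vee s\ge y_0\vee r$ forces $\big((y_0\vee s)/(y_0\vee r)\big)^c\le1$, so the right-hand expression is $\le C(s/r)^{N+1}$. In either case $V(z_0,s)/V(z_0,r)\le C(s/r)^{N+1+c^+}$, and $C$ may be taken $\ge1$.

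Everything here is elementary; the only points requiring a little care are that $y^c$ is singular (though integrable) at $y=0$ when $-1<c<0$, and that the exponent in the doubling inequality is $N+1+c^+$ rather than $N+1+c$ — which is precisely what forces the sign distinction in the last step. I do not expect any genuine obstacle.
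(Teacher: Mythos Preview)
Your proof is correct. The paper does not actually prove this lemma: it simply cites \cite[Lemma 5.2]{MNS-PerturbedBessel}, so there is no in-paper argument to compare against. Your approach --- scale to $r=1$, estimate the one-dimensional integral $\int_{(\beta-1)^+}^{\beta+1}\eta^c\,d\eta$ by the three cases $\beta\le1$, $1\le\beta\le2$, $\beta\ge2$, and then read off the doubling exponent $N+1+c^+$ via the identity $\rho\,((y_0/\rho)\vee1)=y_0\vee\rho$ --- is the natural elementary one and needs no further justification.
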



The next result  provides a local estimate for $V$ which is valid for  $r$ and $y$ in bounded intervals. This is crucial for the proof of Proposition \ref{domination}.
	
	\begin{lem}\label{stima V locale}
		Let $c+1>0$ and let us fix $r_0,R_0>0$. Then, for some $C_1,C_2>0$ one has 
		\begin{align*}
			C_2\, r^{N+1+c^+}\leq 	V(z,r)\leq C_1\, r^{N+1-c^-}\qquad \forall\,r\in [0,r_0],\; y\in [0,R_0].
		\end{align*}
	\end{lem}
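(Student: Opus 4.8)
The plan is to read off the estimate from the explicit description of $V$ in Lemma \ref{Misura palle}, after splitting the region $\{0\le r\le r_0,\ 0\le y\le R_0\}$ into the two subregions $\{y\ge r\}$ and $\{y<r\}$, in each of which $V(z,r)$ is comparable to a single monomial in $r$ and $y$. The case $r=0$ is trivial: $V(z,0)=0$, while both exponents $N+1+c^{+}$ and $N+1-c^{-}$ are positive (the latter because $c>-1$ forces $c^{-}<1$), so the two inequalities hold with any constants. Hence assume $r\in(0,r_0]$ and set $t=y/r$. By the equivalence in Lemma \ref{Misura palle}, $V(z,r)\simeq r^{N+1+c}\,t^{c}\,(t\wedge 1)^{-c}$, which means $V(z,r)\simeq r^{N+1}y^{c}$ when $y\ge r$ and $V(z,r)\simeq r^{N+1+c}$ when $y<r$, with constants depending only on $c$.

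It then remains to bound each of these monomials below by a multiple of $r^{N+1+c^{+}}$ and above by a multiple of $r^{N+1-c^{-}}$, uniformly in the admissible $(r,y)$. This is an elementary case distinction on the sign of $c$, organised around the decomposition $c=c^{+}-c^{-}$, $|c|=c^{+}+c^{-}$. In the regime $y<r$ one writes $r^{N+1+c}=r^{c^{+}}\,r^{N+1-c^{-}}=r^{-c^{-}}\,r^{N+1+c^{+}}$ and uses $0<r\le r_0$ together with $c^{\pm}\ge 0$ to bound $r^{c^{+}}\le r_0^{c^{+}}$ and $r^{-c^{-}}\ge r_0^{-c^{-}}>0$. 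In the regime $y\ge r$ (so that also $r\le y\le R_0$) one distinguishes $c\ge 0$, where $c^{-}=0$ and one uses $1\le (y/r)^{c}$ and $y^{c}\le R_0^{c}$, from $c<0$, where $c^{+}=0$ and one uses $(r/y)^{|c|}\le 1$ and $y^{c}=y^{-|c|}\ge R_0^{-|c|}$. Assembling these with the constant coming from the equivalence $\simeq$ yields admissible $C_1,C_2$.

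The argument is routine and I do not anticipate any genuine obstacle; the only point requiring care is keeping track of which of the bounds $r\le r_0$, $y\le R_0$, $r\le y$ (or $y<r$) is the operative one in each sub-case, and of the signs in $c=c^{+}-c^{-}$. An equally valid but slightly longer alternative would be to estimate $\int_{(y-r)^{+}}^{y+r}s^{c}\,ds$ directly, distinguishing $y\le r$, where the integral equals $(y+r)^{c+1}/(c+1)\simeq r^{c+1}$, from $y>r$, rather than invoking the equivalence in Lemma \ref{Misura palle}.
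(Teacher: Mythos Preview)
Your proposal is correct and follows essentially the same approach as the paper: both arguments invoke the equivalence $V(z,r)\simeq r^{N+1+c}$ for $y\le r$ and $V(z,r)\simeq r^{N+1}y^{c}$ for $y>r$ from Lemma~\ref{Misura palle}, and then bound each monomial using $r\le r_0$, $r\le y$, and $y\le R_0$ as appropriate. The only difference is organisational: the paper performs the outer case split on the sign of $c$ and the inner one on $y$ versus $r$, whereas you reverse that order and encode the sign of $c$ via $c=c^{+}-c^{-}$; the computations are identical.
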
 
	\begin{proof}
		The proof follows by an elementary calculation since by Lemma \ref{Misura palle} one has 
		\begin{align*}
			V(z,r)\simeq f(z,r):=r^{N+1+c}\left(\frac{y}{r}\right)^{c}\left(\frac{y}{r}\wedge 1\right)^{-c}=\begin{cases}
				r^{N+1+c},\quad & y\leq r\\[1ex]
				r^{N+1}y^c\quad & y>r.
			\end{cases}
		\end{align*} 
		Indeed if $c\geq 0$ then one has, for any $r\in [0,r_0]$, $y\in [0,R_0]$,
		\begin{align*}
			&	f(z,r)=
			r^{N+1+c}\leq r^{N+1}\,r_0^c\qquad\quad\; \text{if}\quad y\leq r\\[1ex]
			r^{N+1+c}\leq &f(z,r)=	r^{N+1}y^c\leq r^{N+1}\,R_0^c\qquad\quad  \text{if}\quad y> r.
		\end{align*}
		Similarly, if $c<0$,  
		\begin{align*}
			&	 r^{N+1}\,r_0^c\leq f(z,r)=
			r^{N+1+c}\qquad\quad\; \text{if}\quad y\leq r \\[1ex]
			r^{N+1}R_0^c\leq &f(z,r)=	r^{N+1}y^c\leq r^{N+1+c}\qquad\quad  \text{if}\quad y> r.
		\end{align*}
		These  inequalities prove the claim.
	\end{proof}
	We now rewrite the   bounds of Theorem \ref{true.kernel} in terms of  the measure of the cylindric balls. 
	
	\begin{os}\label{equiv balls}
		We remark that in  \cite{Negro-Spina-SingularKernel-Lower} the cylindric balls are defined  using right neighbourhoods in the $y$-variable, namely  
		$B(x_0,r)\times [y_0,y_0+r)$.  We adopt the equivalent  Definition \eqref{def balls}    in order to simplify some arguments. We  point out that by elementary calculation one has 
		 $$\mu\left(B(x_0,r)\times [y_0,y_0+r)\right)\simeq \mu\left(Q(z_0,r)\right)=V(z_0,r).$$ 
	\end{os}
	\begin{prop}\label{kernel tilde} Let us assume $c+1>0$. Then the heat kernel $p_\mathcal L$ of $\mathcal L$, written with respect to the measure $y^c\ dx\,dy$, satisfies  
		\begin{align}\label{up kernel measure}
			p_{\mathcal L}(t,z_1,z_2)
			&\simeq \frac{C}{V\left(z_1,\sqrt t\right)^{\frac 1 2}V\left(z_2,\sqrt t\right)^{\frac 1 2}}\exp\left(-\frac{|z_1-z_2|^2}{\kappa t}\right),\quad \forall \ t>0, \ z_1,\ z_2\in\R^{N+1}_+.
		\end{align}
		The above  estimates are equivalent, up to a change of  the constants $C,k>0$,  to
		\begin{align*}
			p_{{\mathcal  L}}(t,z_1,z_2)
			\simeq \frac{C}{V\left(z_i,\sqrt t\right)}\,\exp\left(-\dfrac{|z_1-z_2|^2}{kt}\right),\qquad i=1,2.
		\end{align*}	
	\end{prop}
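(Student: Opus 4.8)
The plan is to derive \eqref{up kernel measure} directly from Theorem~\ref{true.kernel}, namely from estimate \eqref{up low section 1}, by recognizing that the $y$-dependent prefactors appearing there are exactly the square roots of $1/V(z_i,\sqrt t)$ up to constants. Indeed, by Lemma~\ref{Misura palle} (second formula, applied with $r=\sqrt t$) one has
\begin{align*}
V(z_i,\sqrt t)\simeq t^{\frac{N+1+c}{2}}\left(\frac{y_i}{\sqrt t}\right)^{c}\left(\frac{y_i}{\sqrt t}\wedge 1\right)^{-c}=t^{\frac{N+1}{2}}\,y_i^{c}\left(\frac{y_i}{\sqrt t}\wedge 1\right)^{-c},
\end{align*}
so that
\begin{align*}
\frac{1}{V(z_i,\sqrt t)^{1/2}}\simeq t^{-\frac{N+1}{4}}\,y_i^{-\frac c2}\left(1\wedge\frac{y_i}{\sqrt t}\right)^{\frac c2}.
\end{align*}
Multiplying the two expressions for $i=1,2$ reproduces precisely the prefactor $t^{-\frac{N+1}{2}}y_1^{-\frac c2}(1\wedge y_1/\sqrt t)^{\frac c2}y_2^{-\frac c2}(1\wedge y_2/\sqrt t)^{\frac c2}$ in \eqref{up low section 1}, so that the two-sided bound in Theorem~\ref{true.kernel} becomes \eqref{up kernel measure} with $\kappa=k$ and a new constant $C$.

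For the second, equivalent formulation I would invoke Remark~\ref{oss equiv estimate}: by \eqref{upper estimates ver2} the kernel is also comparable to $C t^{-\frac{N+1}{2}} y_i^{-c}(1\wedge y_i/\sqrt t)^{c}\exp(-|z_1-z_2|^2/kt)$ for either $i=1$ or $i=2$, and by the computation above $t^{-\frac{N+1}{2}}y_i^{-c}(1\wedge y_i/\sqrt t)^{c}\simeq 1/V(z_i,\sqrt t)$, which gives the claimed single-ball form. Alternatively, one can pass directly between the two-ball and one-ball versions of \eqref{up kernel measure} using the estimate \eqref{equiv estimates up low} of Remark~\ref{oss equiv estimate}: the ratio $V(z_1,\sqrt t)^{1/2}/V(z_2,\sqrt t)^{1/2}$ equals, up to constants, the ratio in \eqref{equiv estimates up low} evaluated at $y_i/\sqrt t$, hence is controlled by $\exp(\epsilon|y_1-y_2|^2/t)\le \exp(\epsilon|z_1-z_2|^2/t)$, which can be absorbed into the Gaussian factor at the cost of enlarging $k$ (and a symmetric lower bound with a smaller $k$).

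This proof is essentially a bookkeeping exercise, so there is no serious obstacle; the only point requiring a little care is making sure the exponents match when rewriting $(y_i/\sqrt t)^{c}(y_i/\sqrt t\wedge 1)^{-c}$ as $(1\wedge y_i/\sqrt t)^{c}$ times powers of $y_i$ and $t$, and keeping track of which constants $C,k$ are allowed to change between the upper and lower bounds (as already flagged in the statement of Theorem~\ref{true.kernel}). I would write the argument in three short lines: recall \eqref{up low section 1}, substitute the formula for $V(z_i,\sqrt t)$ from Lemma~\ref{Misura palle}, and conclude; then add one sentence deducing the one-ball version from \eqref{equiv estimates up low} or \eqref{upper estimates ver2}.
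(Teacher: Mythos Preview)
Your proof is correct and follows essentially the same route as the paper: the paper's one-line proof simply cites Theorem~\ref{true.kernel}, Lemma~\ref{Misura palle}, \eqref{equiv estimates up low}, and Remark~\ref{equiv balls}, and your computation makes explicit the substitution you describe. The only ingredient you did not mention is Remark~\ref{equiv balls}, which is purely cosmetic (it reconciles the ball convention used here with the one in the cited kernel-estimate papers) and does not affect the argument.
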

	\begin{proof}
		These estimates follow from  Remark \ref{equiv balls},  Theorem \ref{true.kernel}, Lemma \ref{Misura palle} and \eqref{equiv estimates up low}.
	\end{proof}
	
	\section{Uniqueness under pointwise growth conditions}\label{Section guniqueness}
	
	We   
prove theorems of Tychonov type for solutions of the  parabolic problem
	\begin{align}\label{CP L}
		\begin{cases}
			\partial_t u=\mathcal Lu & 0<t \leq T,\, z\in\HS,\\[1ex]
			u(0,z) = u_0(z) & z\in\R^{N+1}_+,\\[1ex]
			\ds\lim_{y\to 0}y^{c}\, D_y u(t,x,y)=0&0<t \leq T, \, x\in\R^{N}.
		\end{cases}
	\end{align}
	We consider solutions of the Cauchy problem \eqref{CP L} satisfying
	\begin{align*}
		u\in C^{1,2}\left(]0,T]\times\ov{\R^{N+1}_+}\right)\cap C\left([0,T]\times\ov{\R^{N+1}_+}\right).
	\end{align*}
	This regularity assumption is not restrictive since by \cite[Section 4.1]{dong2020neumann} any weak solution of the equation $\partial_t u=\mathcal Lu$ satisfies $u\in C^{1,2}\left(]0,T]\times\ov{\R^{N+1}_+}\right)$. The continuity at $t=0$ is assumed here only   for simplicity  since all the results of these sections remain valid by regarding the  initial condition  $u(0,z) = u_0\in L^2_{loc}\left(\HS\right)$ in the weak sense $u(t,\cdot)\to u_0$ in $H^{-1}_{loc}\left(\HS\right)$.
	
	\smallskip 
	
	We start by proving an elementary result which clarifies the relationship between the weighted Neumann boundary condition $	\ds\lim_{y\to 0}y^{c}\, D_y u(x,y)=0$,  the assumption $\mathcal Lu\in C\left(\ov{\R^{N+1}_+}\right)$ and the usual Neumann boundary condition $D_yu(x,0)=0$.
	\begin{lem}\label{Boundary 0}
		Let $u\in C^2\left(\ov{\HS}\right)$. Then 
		$\mathcal Lu \in C\left(\ov{\HS}\right)$ if and only if  $ D_yu(\cdot,0)=0$.
		Moreover anyone of the above properties implies\quad  $	\ds\lim_{y\to 0}y^{c}\, D_y u\left(\cdot ,y\right)=0.$
	\end{lem}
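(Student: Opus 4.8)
The plan is to analyze the behavior of $\mathcal{L}u$ as $y \to 0$ by separating the Bessel part $B_y u = D_{yy}u + \frac{c}{y}D_y u$ from the regular part $\Delta_x u + 2a\cdot\nabla_x D_y u$, which is automatically continuous up to the boundary since $u \in C^2(\overline{\HS})$. So the whole question reduces to understanding when $B_y u$ is continuous at $y = 0$, i.e. when $\frac{c}{y}D_y u(\cdot, y)$ has a finite limit as $y \to 0$. Since $D_y u(\cdot, y) \to D_y u(\cdot, 0)$ by $C^2$-regularity, and $D_{yy}u(\cdot,y) \to D_{yy}u(\cdot,0)$ as well, the term $\frac{c}{y}D_y u$ is the only potentially singular contribution.

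\textbf{First}, I would treat the case $c \neq 0$. If $D_y u(\cdot, 0) = 0$, then by the mean value theorem (or Taylor's formula with the $C^2$ bound) $D_y u(\cdot, y) = D_{yy}u(\cdot, \xi_y)\, y$ for some $\xi_y \in (0,y)$, so $\frac{c}{y}D_y u(\cdot, y) = c\, D_{yy}u(\cdot, \xi_y) \to c\, D_{yy}u(\cdot, 0)$, giving continuity of $B_y u$ and hence of $\mathcal{L}u$. Conversely, if $\mathcal{L}u \in C(\overline{\HS})$, then $B_y u$ is continuous at $y=0$; since $D_{yy}u$ is continuous there, $\frac{c}{y}D_y u$ must have a finite limit as $y\to 0$, which forces $D_y u(\cdot, 0) = 0$ (otherwise $\frac{c}{y}D_y u \sim \frac{c}{y}D_y u(\cdot,0)$ blows up, as $c \neq 0$). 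In the degenerate case $c = 0$ the operator is just $\Delta_x u + 2a\cdot\nabla_x D_y u + D_{yy}u$, always continuous on $\overline{\HS}$, and $D_y u(\cdot, 0) = 0$ is \emph{not} implied; but here the hypothesis $c+1>0$ together with how the statement is used (the boundary condition $y^c D_y u \to 0$ is meant relative to the measure) — I would note that when $c=0$ the Neumann condition $D_y u(\cdot,0)=0$ must simply be read as part of the assumption, or observe that the equivalence as stated tacitly concerns the genuinely singular regime; in any event the last implication below holds trivially when $c=0$.

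\textbf{Finally}, for the last assertion: if $D_y u(\cdot, 0) = 0$ then by the same Taylor estimate $y^c D_y u(\cdot, y) = y^{c+1} D_{yy}u(\cdot, \xi_y) \to 0$ since $c + 1 > 0$ and $D_{yy}u$ is bounded near the boundary; and if instead we only know $\mathcal{L}u \in C(\overline{\HS})$, we have just shown (for $c\neq 0$) that this gives $D_y u(\cdot,0)=0$, so again $y^c D_y u \to 0$. This closes the chain.

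\textbf{The main obstacle} I anticipate is a bookkeeping one rather than a conceptual one: making the Taylor/mean-value estimates uniform enough to conclude the \emph{pointwise} (in $x$) limits claimed, and handling the borderline sign issues in $c$ cleanly — in particular being careful that the converse implication genuinely needs $c \neq 0$, so the statement should be understood with that caveat or with the convention that for $c=0$ the condition $D_y u(\cdot,0)=0$ is vacuously compatible. Everything else is a routine consequence of $u \in C^2(\overline{\HS})$ and $c+1>0$.
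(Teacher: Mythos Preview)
Your argument is correct and follows essentially the same route as the paper: both reduce the question to whether $\frac{D_y u}{y}$ has a finite limit at $y=0$, and both use a first-order Taylor expansion (the paper writes $D_yu(x,y)=D_yu(x,0)+D_{yy}u(x,0)y+o(y)$, you use the mean-value form $D_yu(x,y)=D_{yy}u(x,\xi_y)\,y$) to conclude that this limit exists iff $D_yu(\cdot,0)=0$, and then that $y^{c}D_yu\to 0$ since $c+1>0$. Your explicit remark on the $c=0$ case is a point the paper's proof glosses over; the equivalence as stated indeed needs $c\neq 0$, and your caveat is appropriate.
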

	\begin{proof}
		Using Taylor's expansion we write  $$D_yu(x,y)=D_yu(x,0)+D_{yy}u(x,0)y+o(y).$$ The requirement $\mathcal Lu \in C\left(\ov{\HS}\right)$ is  equivalent to the existence of  $\ds\lim_{y\to 0} \frac{D_y}{y} u\left(x ,y\right)=\ell \in \C$ which is then equivalent to $D_yu(\cdot,0)=0$. In particular $\ell=D_{yy}(x,0)$.

Finally $\ds\lim_{y\to 0}y^{c}\, D_y u\left(\cdot ,y\right)=0$, by Taylor's expansion again, since $D_y(x,0)=0$.
\end{proof}
	The following lemma shows that if $u$ is a solution of \eqref{CP L}, the Neumann boundary condition  follows from the regularity.
	\begin{lem}  \label{boundary}
		Let  $u\in C^{1,2}\left(]0,T]\times\ov{\R^{N+1}_+}\right)$ be such that $\partial_t u-\mathcal Lu= 0$ in $]0,T]\times\ov{\R^{N+1}_+}$. Then $\ds D_y u(t,x,0)=0$ for $t\in ]0,T],\ x\in\R^{N}$.
	\end{lem}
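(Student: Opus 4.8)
The plan is to show that $D_y u(t,x,0)=0$ by using the equation together with the regularity $u\in C^{1,2}\big(]0,T]\times\ov{\R^{N+1}_+}\big)$ and Lemma \ref{Boundary 0}. Fix $t\in\,]0,T]$. By assumption, for each such $t$ the function $z\mapsto u(t,z)$ lies in $C^2\big(\ov{\R^{N+1}_+}\big)$, and moreover $\mathcal L u(t,\cdot)=\partial_t u(t,\cdot)$ is continuous on $\ov{\R^{N+1}_+}$ because $u\in C^{1,2}$ forces $\partial_t u$ to be continuous up to the boundary. Thus $\mathcal L u(t,\cdot)\in C\big(\ov{\R^{N+1}_+}\big)$, and Lemma \ref{Boundary 0}, applied to the spatial function $u(t,\cdot)$, yields exactly $D_y u(t,x,0)=0$ for every $x\in\R^N$.

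The only point that needs a little care is the claim that $\partial_t u$ extends continuously to $y=0$. First I would note that the space $C^{1,2}\big(]0,T]\times\ov{\R^{N+1}_+}\big)$ is, by definition, the class of functions whose time derivative $\partial_t u$ and whose spatial derivatives up to second order are continuous on the closed set $]0,T]\times\ov{\R^{N+1}_+}$; hence $\partial_t u(t,\cdot)$ is continuous on $\ov{\R^{N+1}_+}$ for each fixed $t$. Then, since $\partial_t u-\mathcal L u=0$ holds on $]0,T]\times\ov{\R^{N+1}_+}$, we get $\mathcal L u(t,\cdot)=\partial_t u(t,\cdot)\in C\big(\ov{\R^{N+1}_+}\big)$ for each $t\in\,]0,T]$. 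This is precisely the hypothesis under which Lemma \ref{Boundary 0} delivers the vanishing of the normal derivative.

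The main (indeed the only) obstacle is therefore just the bookkeeping about which function of the spatial variables we are feeding into Lemma \ref{Boundary 0}: one must freeze $t$ and verify that the frozen-time slice $u(t,\cdot)$ satisfies the two hypotheses of that lemma, namely $C^2$ regularity up to the boundary and $\mathcal L u(t,\cdot)\in C\big(\ov{\R^{N+1}_+}\big)$. Once this is in place, the conclusion is immediate, and since $t\in\,]0,T]$ was arbitrary the identity $D_y u(t,x,0)=0$ holds for all $t\in\,]0,T]$, $x\in\R^N$.
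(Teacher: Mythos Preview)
Your proof is correct and follows essentially the same approach as the paper: from $u\in C^{1,2}$ one gets $\partial_t u\in C\big(]0,T]\times\ov{\R^{N+1}_+}\big)$, hence $\mathcal L u=\partial_t u$ is continuous up to the boundary, and Lemma~\ref{Boundary 0} (applied at each fixed $t$) gives $D_y u(t,x,0)=0$. The paper's proof is a one-line version of exactly this argument.
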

	\begin{proof}
		The proof follows by Lemma \ref{Boundary 0} since the hypothesis $u\in C^{1,2}\left(]0,T]\times\ov{\R^{N+1}_+}\right)$ and the equation $\partial_t u-\mathcal Lu= 0$ imply that $\mathcal Lu\in C\left(]0,T]\times\ov{\R^{N+1}_+}\right)$.

	\end{proof}
		%

	We now show that for a compactly supported initial datum $u_0$ the semigroup solution $u=e^{t\mathcal L}u_0$ has the regularity above.
	\begin{lem}\label{regularity of semigroup}
		Let  $u_0\in C_c\left(\ov{\HS}\right)$ and $u=e^{t\mathcal L}u_0$. Then   
		$$u\in C^{1,2}\left(]0,\infty[\times\ov{\R^{N+1}_+}\right)\cap C\left([0,\infty[\times\ov{\R^{N+1}_+}\right).$$
	\end{lem}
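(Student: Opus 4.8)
The plan is to prove Lemma \ref{regularity of semigroup} by combining the explicit kernel bounds of Theorem \ref{true.kernel} (equivalently Proposition \ref{kernel tilde}) with interior parabolic regularity from \cite{dong2020neumann} and the semigroup property. Throughout, write $u(t,z)=\int_{\R^{N+1}_+}p_{\mathcal L}(t,z,w)u_0(w)\,d\mu(w)$; since $u_0\in C_c(\ov{\HS})$ it lies in $L^2_c$, so this representation is legitimate and $u$ solves $\partial_t u=\mathcal L u$ in the weak sense on $]0,\infty[\times\ov{\HS}$, with the Neumann boundary condition built into the construction of the semigroup.

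First I would establish the interior regularity $u\in C^{1,2}\left(]0,\infty[\times\ov{\R^{N+1}_+}\right)$. The key point is that $\mathcal L$ has maximal regularity (recalled after Theorem \ref{Teo gen in Lpm}) and that weak solutions of $\partial_t u=\mathcal L u$ are automatically in $C^{1,2}\left(]0,T]\times\ov{\R^{N+1}_+}\right)$ by \cite[Section 4.1]{dong2020neumann}; this applies up to the boundary $y=0$ precisely because the estimates in \cite{dong2020neumann} are of Neumann type. One subtlety is that $u(t,\cdot)=e^{t\mathcal L}u_0$ a priori lies in $L^2_c$, not in a space of $C^{1,2}$ functions, so I would invoke the smoothing of the analytic semigroup: for $t>0$ and any $k$, $u(t,\cdot)=e^{(t/2)\mathcal L}\big(e^{(t/2)\mathcal L}u_0\big)\in D(\mathcal L^k)$, and $D(\mathcal L)=W^{2,p}_{\mathcal N}(m)$ embeds (by the usual Sobolev embeddings in the weighted spaces, valid for suitable $p$ large) into spaces of continuous functions; iterating gives the desired local $C^{1,2}$ regularity, hence also that $u$ is a genuine classical solution and, by Lemma \ref{boundary}, that $D_y u(t,x,0)=0$.

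Next I would prove continuity up to $t=0$, i.e. $u\in C\left([0,\infty[\times\ov{\R^{N+1}_+}\right)$ with $u(0,\cdot)=u_0$. Fix a compact $K\supset \supp u_0$ and a point $z_0\in\ov{\HS}$. Using the Gaussian upper bound from Proposition \ref{kernel tilde} together with the doubling property of $V$ (Lemma \ref{Misura palle}), one gets the standard estimate $\int_{\R^{N+1}_+}p_{\mathcal L}(t,z,w)\,d\mu(w)\le C$ uniformly in $t,z$ (the semigroup is a contraction-type family on $L^\infty$, or one integrates the Gaussian directly), and moreover $\int_{\{|w-z|\ge\delta\}}p_{\mathcal L}(t,z,w)\,d\mu(w)\to 0$ as $t\to 0^+$ uniformly for $z$ in compact sets. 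These are exactly the hypotheses of a classical approximate-identity argument: for $z$ near $z_0$ and $t$ small,
\begin{align*}
|u(t,z)-u_0(z_0)|\le \int_{\{|w-z_0|<\delta\}}p_{\mathcal L}(t,z,w)|u_0(w)-u_0(z_0)|\,d\mu(w)+\|u_0\|_\infty\!\!\int_{\{|w-z_0|\ge\delta\}}\!\!\!p_{\mathcal L}(t,z,w)\,d\mu(w),
\end{align*}
and uniform continuity of $u_0$ makes the first term small while the kernel tail makes the second small. This gives $u(t,z)\to u_0(z_0)$ as $(t,z)\to(0,z_0)$, which together with the already established continuity on $]0,\infty[\times\ov{\HS}$ yields joint continuity on $[0,\infty[\times\ov{\HS}$.

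The main obstacle I expect is the boundary behaviour at $y=0$: one must make sure the $C^{1,2}$ regularity and the approximate-identity argument survive uniformly up to the degenerate/singular boundary. This is handled by the fact that the kernel bounds in Proposition \ref{kernel tilde} and the doubling bound in Lemma \ref{Misura palle} are uniform in $y_1,y_2\ge 0$ (there is no blow-up of $\int p\,d\mu$ as $y\to 0$), so the Gaussian-type tail estimates hold uniformly on $\ov{\HS}$, and the regularity theory of \cite{dong2020neumann} is specifically designed for these Neumann problems up to $y=0$. Everything else is the routine verification that the kernel integrates to a bounded function and has vanishing tails, which follows by splitting the integral according to whether $y_2\lesssim\sqrt t$ or $y_2\gtrsim\sqrt t$ and using Lemma \ref{Misura palle} to control $V(z_2,\sqrt t)^{-1}$.
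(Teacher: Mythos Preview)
Your treatment of the $C^{1,2}$ regularity on $]0,\infty[\times\ov{\HS}$ is essentially the paper's: both invoke \cite[Section 4.1]{dong2020neumann} applied to the weak semigroup solution. The extra argument via $D(\mathcal L^k)$ is not needed once that reference is in place.

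For continuity at $t=0$ you take a genuinely different route. The paper argues by density: for $u_0\in C_c^\infty(\ov{\HS})\subset D(\mathcal L)$ and $p$ large, strong continuity in $W^{2,p}\hookrightarrow C_0$ gives uniform convergence $e^{t\mathcal L}u_0\to u_0$; then the $L^\infty$-boundedness of the semigroup (from \cite[Proposition~6.1]{Negro-Spina-SingularKernel-Lower}) transfers this to all $u_0\in C_c(\ov{\HS})$. You instead run a direct approximate-identity argument using the Gaussian kernel bounds of Proposition~\ref{kernel tilde}. Your route is more self-contained in that it uses only the pointwise kernel estimates already recorded in the paper, while the paper's route avoids any computation with the kernel.

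One point to tighten: the displayed inequality you wrote for $|u(t,z)-u_0(z_0)|$ is only valid if $\int_{\HS}p_{\mathcal L}(t,z,w)\,d\mu(w)=1$; otherwise a term $|u_0(z_0)|\,\big|1-\int p_{\mathcal L}(t,z,w)\,d\mu(w)\big|$ is missing. You only asserted that this integral is bounded, not that it equals (or tends to) $1$. This is easy to fix here since $\mathcal L 1=0$ and the Neumann semigroup is conservative, but it should be stated; without it the approximate-identity computation does not close.
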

	\begin{proof}
		Since the semigroup solution $u$ is in particular  a weak solution of the Cauchy Problem \eqref{CP L}, the regularity   $u\in C^{1,2}\left(]0,\infty[\times \ov\HS\right)$ follows from the results in    It remains to prove that $u\in C\left([0,\infty[\times\ov{\R^{N+1}_+}\right)$.
		Let us assume, preliminarily, $u_0\in C^{\infty}_{c}\left(\ov\HS\right)$. 
		We use Theorem \ref{Teorema generazione} with the Lebesgue measure and a sufficiently large $p$.  We have   $$C_c^\infty\left(\ov\HS\right) \subset D(\mathcal L)\hookrightarrow W^{2,p}\left(\HS\right) \hookrightarrow C_0(\ov\HS),$$ by Sobolev embedding. Then  $u(t,\cdot)\rightarrow u_0$ in $W^{2,p}\left(\HS\right)$ as $t\rightarrow 0$ and so uniformly.

		
		Since by \cite[Proposition 6.1]{Negro-Spina-SingularKernel-Lower} the semigroup $e^{t\mathcal L}$ is uniformly bounded in 
		$L^{\infty}\left(\HS\right)$, by approximation we get $e^{t\mathcal L} u_0 \to u_0$ as $t \to 0$ for any $u_0$ continuous and with a compact support. 
	\end{proof}
	
	We also need a maximum principle for our operator.
	
	\begin{prop}\label{maximum}
		Let  $R>0$, $u\in C^{1,2}\left(]0,T]\times\ov{\R^{N+1}_+}\right)\cap C\left([0,T]\times\ov{\R^{N+1}_+}\right)$,   be such that
		\begin{align*}	\begin{cases}
				\partial_t u-\mathcal Lu\leq 0 & 0<t\leq T, \, z \in \HS\\[1ex]
				u(0,z) \leq 0 & z\in\ov{\R^{N+1}_+}\\[1ex]
				D_y u(t,x,0)=0&0<t \leq T,\ x\in\R^{N}\\[1ex]
				u(t,z)\leq 0&0 \leq t \leq T,\ |z|\geq R.
			\end{cases}
		\end{align*}
		Then $u\leq 0$ in $[0,T]\times\ov{\R^{N+1}_+}$.
	\end{prop}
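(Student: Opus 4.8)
The plan is to prove this maximum principle by constructing a suitable barrier function and reducing to a compact region where the classical (parabolic) maximum principle applies. The essential difficulty is that the domain $\ov{\R^{N+1}_+}$ is unbounded and the operator $\mathcal L$ is degenerate/singular at $y=0$; however, the hypotheses $u\le 0$ for $|z|\ge R$ and $D_yu(t,x,0)=0$ on the boundary are designed precisely to neutralize these two issues.

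First I would fix $\eps>0$ and consider $u_\eps(t,z)=u(t,z)-\eps\,\phi(t,z)$ for a strictly positive supersolution $\phi$ of $\partial_t\phi-\mathcal L\phi\ge 0$ satisfying the Neumann condition $D_y\phi(t,x,0)=0$; a natural candidate is a polynomial barrier such as $\phi(t,z)=e^{\lambda t}(1+|x|^2+y^2)$ with $\lambda$ chosen large enough (depending only on $N$, $c$, $|a|$) that $\mathcal L\phi\le\lambda\phi$ on $\HS$. One checks that $B_y(y^2)=2+2c$ is bounded, $\mathcal L(|x|^2)=2N$, and the mixed term $2a\cdot\nabla_x D_y$ annihilates such a product-free quadratic, so $\mathcal L\phi$ is controlled by a constant multiple of $1+|x|^2+y^2$; moreover $D_y\phi(t,x,0)=e^{\lambda t}\cdot 2y|_{y=0}=0$. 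Then $u_\eps$ is a strict subsolution: $\partial_t u_\eps-\mathcal L u_\eps\le 0$, with strict negativity of the $\phi$-contribution, $u_\eps(0,z)<0$, and $D_y u_\eps(t,x,0)=0$.

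Next I would argue that $u_\eps\le 0$ everywhere. Suppose not; since $\phi\to+\infty$ as $|z|\to\infty$ while $u$ is bounded on $[0,T]\times\{|z|\le R'\}$ for every $R'$ and $u\le 0$ outside $|z|\le R$, the positive part of $u_\eps$ is attained on a compact set $K=[0,T]\times\{|z|\le \rho\}\cap\ov{\HS}$ for some $\rho\ge R$ (any maximum point has $|z|\le R$ in fact, since $u_\eps\le u\le 0$ for $|z|\ge R$). At an interior positive maximum point $(t_0,z_0)$ with $t_0>0$ and $y_0>0$ we get the usual contradiction $\partial_t u_\eps\ge 0$, $\nabla u_\eps=0$, $D^2u_\eps\le 0$, hence $\partial_t u_\eps-\mathcal L u_\eps\ge 0$, contradicting the strict inequality; if the maximum is attained on the boundary portion $y_0=0$, the Neumann condition $D_y u_\eps=0$ together with the Hopf-type reflection argument (or simply applying Lemma~\ref{Boundary 0}, writing $\mathcal L u_\eps$ as a limit and using $D_{yy}u_\eps\le 0$, $\nabla_x u_\eps=0$ there) again forces $\partial_t u_\eps-\mathcal L u_\eps\ge 0$; and $t_0=0$ is excluded since $u_\eps(0,\cdot)<0$. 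Therefore $u_\eps\le 0$ on $[0,T]\times\ov{\HS}$, and letting $\eps\to 0^+$ gives $u\le 0$.

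The main obstacle is handling the boundary $y=0$ rigorously: one must be sure that a positive maximum cannot escape there despite the coefficient $c/y$ blowing up. The clean way is the observation already recorded in Lemma~\ref{Boundary 0}: if $u_\eps\in C^2$ up to the boundary and $D_yu_\eps(\cdot,0)=0$, then $\frac1y D_yu_\eps\to D_{yy}u_\eps(\cdot,0)$, so $\mathcal L u_\eps$ extends continuously to $y=0$ with value $\Delta_x u_\eps+2a\cdot\nabla_x D_y u_\eps+(1+c)D_{yy}u_\eps$; at a boundary maximum with $t_0>0$ one has $\nabla_x u_\eps=0$, $D^2_{xx}u_\eps\le 0$, and from $D_yu_\eps\equiv 0$ on $\{y=0\}$ also $D_{x_iy}u_\eps=0$ and $D_{yy}u_\eps\le 0$ (the latter because $u_\eps(t_0,x_0,\cdot)$ has a maximum with zero derivative at $y=0$), so $\mathcal L u_\eps(t_0,z_0)\le 0$ while $\partial_t u_\eps(t_0,z_0)\ge 0$, the desired contradiction. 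With this, the argument goes through uniformly for all $c+1>0$ and all $|a|<1$.
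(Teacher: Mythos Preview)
Your proof is correct and follows essentially the same strategy as the paper: subtract a strictly positive supersolution satisfying the Neumann condition, argue by contradiction at a positive maximum, and handle the boundary $y=0$ via the observation of Lemma~\ref{Boundary 0} that $\tfrac{1}{y}D_yu_\eps\to D_{yy}u_\eps$ there, so that $\mathcal L u_\eps$ at the boundary becomes $\Delta_x u_\eps+(1+c)D_{yy}u_\eps\le 0$.

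The only difference is your choice of penalizer. You take $\phi(t,z)=e^{\lambda t}(1+|x|^2+y^2)$, which is a genuine barrier growing at infinity; the paper instead uses simply $v=u-\eps t$. The point is that the hypothesis $u\le 0$ for $|z|\ge R$ already confines any positive maximum of $u-\eps t$ to the compact set $[0,T]\times\{|z|\le R\}\cap\ov{\HS}$, so there is no need for the barrier to blow up at infinity. Your $\phi$ would be the natural choice if one had instead to prove a version without the last hypothesis (e.g.\ for bounded $u$), but here it introduces the extra parameter $\lambda$ and the verification $\mathcal L\phi\le\lambda\phi$ without buying anything. Otherwise the boundary analysis and the contradiction argument are identical to the paper's.
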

	\begin{proof}
		Let $\eps>0$, $v=u-\eps t$. Then $\partial_t v-\mathcal Lv\leq-\eps<0$ and $v$ satisfies all the other assumptions as $u$.
		Assume $v>0$ somewhere. Then $v$ has a positive maximum at some point $P_0=(t_0,x_0,y_0)$ with $t_0>0$.  Moreover $\partial_t v(P_0)\geq 0$. We now distinguish two cases. If $P_0$ is an inner point i.e. $y_0>0$, then $D_yv(P_0)=0$ and $\mathcal L v (P_0)\leq 0$ by the ellipticity assumption.   If $P_0$ is on the boundary of $\HS$, i.e. $y_0=0$, then $\Delta_xv(P_0)\leq 0$ and  $\nabla_xD_yv(P_0)=0$, by the boundary condition. Moreover 
		\begin{align*}
			\lim_{y\to 0}\frac{D_yv(t_0,x_0,y)}{y}=D_{yy} v(P_0)
		\end{align*}
		which implies, since $c+1>0$,
		\begin{align*}
			\lim_{y\to 0}\left(D_{yy}v(t_0,x_0,y)+c\frac{D_yv(t_0,x_0,y)}{y}\right)=(c+1)D_{yy}v(P_0)\leq 0.
		\end{align*}
		Therefore we have in any case $\mathcal L  v(P_0)\leq 0$. This implies $\partial_t v(P_0)-\mathcal L  v(P_0)\geq 0$ and this is in contradiction with $\partial_t v-\mathcal Lv<0$.
		It follows that $v\leq 0$ and so $u\leq \eps t$ in  $[0,T]\times\ov{\R^{N+1}_+}$. Letting $\eps \to 0$, the claim  follows.
	\end{proof}
	
	We can now prove uniqueness under a pointwise growth condition. We use a barrier as in the case of the Laplacian. In the next section we shall prove a similar result under a weaker integral growth condtion, see Theorem \ref{uniqueness tacklind}.

	\begin{teo}\label{Tychonov}
		Let  $u\in C^{1,2}\left(]0,T]\times\ov{\R^{N+1}_+}\right)\cap C\left([0,T]\times\ov{\R^{N+1}_+}\right)$ be a solution of the Cauchy problem 
		\begin{align*}	\begin{cases}
				\partial_t u=\mathcal Lu & 0<t \leq T,\ z\in \ov{\R^{N+1}_+}\\[1ex]
				u(0,z) =0 & z\in\R^{N+1}_+.\\[1ex]
			\end{cases}
		\end{align*}
		If  $|u(t,z)|\leq C e^{\beta|z|^2}$  for some positive constant $\beta,C>0$, 
		then $u \equiv 0$.
	\end{teo}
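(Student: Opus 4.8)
The plan is to adapt the classical Tychonov barrier argument to the Bessel setting. Since the difference of two solutions with the same initial datum solves the homogeneous problem, it suffices to show that a solution vanishing at $t=0$ and satisfying the Gaussian bound $|u(t,z)|\le Ce^{\beta|z|^2}$ must be identically zero; by considering $u$ and $-u$ it is enough to prove $u\le 0$. First I would work on a short time interval $[0,\tau]$ with $\tau$ small enough (depending on $\beta$) so that $4\beta\tau<1$, say $\tau<\frac{1}{8\beta}$; once $u\equiv 0$ on $[0,\tau]$ one iterates over $[\tau,2\tau]$, etc., to cover all of $[0,T]$. So the real content is a local-in-time uniqueness statement under a Gaussian bound.

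The barrier I would use is the standard heat-type function
\begin{align*}
	\phi(t,z)=\frac{1}{(\sigma-t)^{\nu}}\exp\left(\frac{|z-z_0|^2}{k(\sigma-t)}\right),
\end{align*}
for a fixed $z_0\in\R^{N+1}_+$, where $\sigma>\tau$ is chosen close to $\tau$, the constant $k$ is the one from the kernel estimates (or simply any sufficiently large constant), and $\nu=\frac{N+1}{2}$ (the exponent is irrelevant as long as it makes the computation work; one can also take $\nu=0$ and only rely on the exponential). The key computation is $\partial_t\phi-\mathcal L\phi\ge 0$ on $]0,\sigma[\times\ov{\R^{N+1}_+}$ for $k$ large, which is where the Bessel term enters: one has $B_y\left(e^{|z-z_0|^2/(k(\sigma-t))}\right)$ producing, besides the usual terms, the contribution $\frac{c}{y}D_y\phi$. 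A direct calculation shows $D_y\phi=\frac{2(y-y_0)}{k(\sigma-t)}\phi$, so $\frac{c}{y}D_y\phi=\frac{2c(y-y_0)}{ky(\sigma-t)}\phi$; the potentially dangerous factor $\frac{y-y_0}{y}$ is bounded when $y\ge y_0/2$, and when $0<y<y_0/2$ one has $\frac{|y-y_0|}{y}\le \frac{y_0}{y}$ but then $|y-y_0|\ge y_0/2$ forces the Gaussian exponent to be large, so after absorbing into the quadratic term $\frac{|y-y_0|^2}{k^2(\sigma-t)^2}\phi$ coming from $D_{yy}\phi$, the super-solution inequality holds for $k$ large. (Alternatively one may take $z_0$ on the boundary, $y_0=0$, so that $\frac{c}{y}D_y\phi=\frac{2c}{k(\sigma-t)}\phi$ has no singular factor at all — this simplifies the verification considerably, and it suffices since one can comparison-argue on each ball $|z-z_0|\le R$.) I would also need to check the Neumann condition: $D_y\phi(t,x,0)=\frac{-2y_0}{k(\sigma-t)}\phi$, which vanishes precisely when $y_0=0$ — another reason to center the barrier on the boundary, or else to add a harmless term like $y^2$ that kills the normal derivative and remains a supersolution.

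With such a $\phi$ in hand, fix $R>0$ and apply the maximum principle of Proposition \ref{maximum} on the cylinder $\{0\le t\le\tau,\ |z-z_0|\le R\}$ to the function $w=u-\eta\,\phi$ where $\eta>0$: on the lateral boundary $|z-z_0|=R$ we have $|u|\le Ce^{\beta|z|^2}\le C'e^{2\beta|z-z_0|^2}=C'e^{2\beta R^2}$ while $\eta\phi\ge \eta\sigma^{-\nu}e^{R^2/(k\sigma)}$; choosing $\sigma$ with $k\sigma<1/(2\beta)$ (possible since $\sigma$ can be taken arbitrarily close to $\tau<1/(8\beta)$ and $k$ is fixed — if necessary shrink $\tau$), the exponential in $\phi$ beats the Gaussian bound on $u$ as $R\to\infty$, so for $R$ large enough $w\le 0$ on the lateral boundary; $w\le 0$ at $t=0$ since $u(0,\cdot)=0$ and $\phi>0$; and $\partial_t w-\mathcal Lw=-\eta(\partial_t\phi-\mathcal L\phi)\le 0$. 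Proposition \ref{maximum} then gives $w\le 0$, i.e. $u(t,z)\le\eta\phi(t,z)$ on that cylinder; letting $\eta\to 0$ yields $u\le 0$ on $[0,\tau]\times\{|z-z_0|\le R\}$, and since $z_0,R$ were arbitrary, $u\le 0$ on $[0,\tau]\times\ov{\R^{N+1}_+}$. Applying the same to $-u$ gives $u\equiv 0$ on $[0,\tau]$, and iteration finishes the proof. I expect the main obstacle to be the supersolution verification near $y=0$, i.e. controlling the singular term $\frac{c}{y}D_y\phi$ and simultaneously arranging the Neumann condition $D_y\phi(\cdot,0)=0$; centering the barrier on the boundary and, if needed, adding a small polynomial correction resolves both at once.
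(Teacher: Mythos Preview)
Your approach is essentially the same as the paper's: a Gaussian barrier, the maximum principle of Proposition~\ref{maximum}, symmetry in $\pm u$, and iteration in time. The paper simply takes $z_0=0$ from the outset---writing $v(t,x,y)=(\delta-t)^{-\alpha}\exp\big(\kappa(|x|^2+y^2)/(\delta-t)\big)$---so that $D_yv(t,x,0)=0$ and $\tfrac{c}{y}D_yv=\tfrac{2\kappa c}{\delta-t}v$ are automatic, bypassing your entire discussion of general $z_0$ and polynomial corrections; your parenthetical remark ``take $z_0$ on the boundary'' is exactly this and is all that is needed.
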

	\begin{proof}
		We fix  $\alpha>0$ and let  $\kappa, \delta >0$ to be chosen later. Set $$v(t,x,y)=\frac{1}{(\delta-t)^\alpha} \exp\left\{\frac{\kappa}{\delta-t} (|x|^2+y^2)\right\}.$$ Then for $0 \leq t <\delta$
		\begin{align*}
			&\partial_tv= \left (\frac{\alpha}{\delta-t}+\frac{\kappa}{(\delta-t)^2}(|x|^2+y^2) \right )v, \quad D_{x_i}v=\frac{2\kappa x_i}{\delta-t}v, \quad D_yv=\frac{2\kappa y}{\delta-t}v \\
			&D_{x_i x_i}v=\left (\frac{4\kappa^2 x_i^2}{(\delta-t)^2}+\frac{2\kappa}{\delta-t} \right )v, \quad D_{yy}v=\left (\frac{4\kappa^2 y^2}{(\delta-t)^2}+\frac{2\kappa}{\delta-t} \right )v, \quad D_{x_iy}v=\frac{4\kappa^2 x_i y}{\delta-t} v.
		\end{align*}
		Then, choosing  $\kappa$  sufficiently small, we get 
		\begin{align*}
			v_t-\mathcal L v= \left (\frac{\alpha}{\delta-t}+\frac{\kappa-4 \kappa^2}{(\delta-t)^2}(|x|^2+y^2)-\frac{2\kappa (N+1+c)}{\delta-t}-\frac{4\kappa^2}{\delta-t}(a\cdot x)y \right )v \geq 0.
		\end{align*}
		Let us choose $\delta=\frac{k}{2\beta}$. Given $\eps>0$, the function $w=u-\eps v$ goes to $-\infty$, as $|(x,y)|$ goes to infinity, uniformly in $\left[0,\frac{\delta}{2}\right]$. Then
		\begin{align*}	\begin{cases}
				\partial_t w-\mathcal Lw\leq 0 & 0<t \leq \frac{\delta}{2}, \, z\in  \ov{\R^{N+1}_+}\\[1ex]
				w(0,z) <0 & z\in\R^{N+1}_+\\[1ex]
				 D_y w(t,x,y)=0&x\in\R^{N}\\[1ex]
				\ds\lim_{|z|\to +\infty} w(t,x,y)=-\infty &{\rm uniformly\ for\ } t\in \left [0,\frac{\delta}{2}\right].
			\end{cases}
		\end{align*}
		Proposition \ref{maximum} yields $w\leq 0$ everywhere and, letting $\eps$ to $0$, $u\leq 0$ in $\left[0,\frac{\delta}{2}\right]\times \ov{\R^{N+1}_+}$. By arguing similarly for $-u$, we deduce $u= 0$ in $\left[0,\frac{\delta}{2}\right]\times \ov{\R^{N+1}_+}$. By repeating the same arguments in steps of lenght $\frac{\delta}{2}$, we get $u=0$ in $[0,T]\times \HS$.
	\end{proof}

	\section{Mean value inequalities and uniqueness under integral growth conditions}

We  need  mean value inequalities and interior estimates for solutions. To this aim  for any  $r> 0$ and any  $(t_0,z_0)\in[0,T]\times\ov{\R^{N+1}_+}$, we   consider the   parabolic cylinders
	\begin{align*}
		\mathcal I\left((t_0,z_0),r\right):=]t_0-r^2,t_0[\times Q(z_0,r).
	\end{align*} 
	\begin{lem} \label{stima-sup}
		Let  $u\in C^{1,2}\left(]0,T]\times\ov{\R^{N+1}_+}\right)\cap C\left([0,T]\times\ov{\R^{N+1}_+}\right)$ be a solution of the  Cauchy problem 
		\begin{align*}	\begin{cases}
				\partial_t u=\mathcal Lu, & 0<t \leq T, \, z\in {\R^{N+1}_+}\\[1ex]
				u(0,z) =0, & z\in\ov{\R^{N+1}_+}\\[1ex]
			\end{cases}
		\end{align*}
 and let us extend $u(s,\cdot)$ to $0$ for negative times $s<0$.
		For $r> 0$, $(t_0,z_0)\in[0,T]\times\ov{\R^{N+1}_+}$ and $0<p\leq 2$ we have
		$$\sup_{\mathcal I\left((t_0,z_0),\frac{r}{2}\right)}  |u|\leq C \Big(\fint_{\mathcal I\left((t_0,z_0),r\right)} |u(s,z_2)|^p\, ds\,d\mu(z_2)\Big)^{\frac 1 p}$$ 
		for some $C>0$ independent of $u$, $r$, $(t_0,z_0)$. 
	\end{lem}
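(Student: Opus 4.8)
The plan is to reduce the statement to a known parabolic mean value / local boundedness inequality for the singular operator $\mathcal L$, exploiting the doubling property of the measure $\mu$ (Lemma \ref{Misura palle}) and the heat kernel bounds in Proposition \ref{kernel tilde}, via a Moser-type iteration. Since $u$ solves $\partial_t u = \mathcal L u$ with $u$ extended by $0$ for $s<0$, on each cylinder $\mathcal I((t_0,z_0),r)$ contained in $]{-}\infty,T]\times\ov{\R^{N+1}_+}$ it is (locally) a weak solution of the equation with the weighted Neumann condition at $y=0$, so the parabolic regularity theory of \cite{dong2020neumann} applies up to the boundary. First I would treat the case $p=2$: here one invokes the De Giorgi--Nash--Moser estimate for $\mathcal L$ in the form $\sup_{\mathcal I((t_0,z_0),r/2)}|u|^2 \le C\,\fint_{\mathcal I((t_0,z_0),r)}|u|^2\,ds\,d\mu$, which holds because $\mu$ is doubling and the operator satisfies scale-invariant energy (Caccioppoli) inequalities; the weighted Neumann condition is exactly the natural condition making $\mathcal L$ self-adjoint in $L^2_c$ and ensuring no boundary term appears in the energy inequality.

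The scale invariance of the constant $C$ (independence of $r$ and $(t_0,z_0)$) is the crucial point and is where I would spend most care. The operator $\mathcal L$ is invariant under the parabolic dilations $(t,x,y)\mapsto(\lambda^2 t,\lambda x,\lambda y)$, and the measure $y^c\,dx\,dy$ transforms by the factor $\lambda^{N+1+c}$, while the cylinders $Q(z_0,r)$ and $\mathcal I((t_0,z_0),r)$ rescale accordingly; thus averages $\fint_{\mathcal I}|u|^p$ are dilation invariant. Translation in $t$ and in $x$ also leaves $\mathcal L$ and $\mu$ invariant. Translation in $y$ is \emph{not} a symmetry because of the $c/y$ term, so the constant could a priori depend on $y_0$; this is precisely where Lemma \ref{stima V locale} (or rather the doubling bound of Lemma \ref{Misura palle}, which is genuinely uniform in $z_0$) enters: the Moser iteration only uses the doubling constant of $\mu$, which by Lemma \ref{Misura palle} is uniform over all $z_0\in\ov{\R^{N+1}_+}$, so the resulting $C$ is uniform as well. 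One must also note that near the boundary the cylinder $Q(z_0,r)$ may be truncated (the $(y_0-r)^+$ in \eqref{def balls}), but the De Giorgi--Moser machinery for Neumann problems is designed to handle exactly this, and the doubling property survives the truncation.

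Finally, to pass from $p=2$ to arbitrary $0<p\le 2$ I would use the standard interpolation trick: write, for $p<q\le 2$ and using the $q$-version on shrinking cylinders together with Young's inequality,
\begin{align*}
\sup_{\mathcal I_{r/2}}|u| \le C\,\Big(\fint_{\mathcal I_{r}}|u|^q\Big)^{1/q}
\le C\,\Big(\sup_{\mathcal I_{r}}|u|\Big)^{1-\frac pq}\Big(\fint_{\mathcal I_{r}}|u|^p\Big)^{1/q},
\end{align*}
then absorb the $\sup$ term by a covering/iteration argument on the family of cylinders $\mathcal I((t_0,z_0),\rho)$ with $r/2\le\rho\le r$, a classical lemma (e.g.\ Giaquinta's iteration lemma) that converts such self-improving inequalities into the stated bound with exponent $1/p$. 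The main obstacle is thus not conceptual but bookkeeping: verifying that the constants in the Caccioppoli/Moser steps depend only on $N$, $c$, $p$ and the doubling constant, and nothing else, so that the final $C$ is genuinely independent of $r$ and $(t_0,z_0)$; the weighted Neumann condition and the $0$-extension for $s<0$ must be carried consistently through each step so that no boundary or initial-time terms spoil the energy estimates.
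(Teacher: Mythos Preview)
Your proposal is correct and follows essentially the same route as the paper: extend $u$ by zero for negative times to obtain a weak solution on $]-\infty,T]\times\R^{N+1}_+$ (the paper cites \cite{Aronson} for this), obtain the $p=2$ estimate from the parabolic regularity theory of \cite{dong2020neumann} (the paper quotes \cite[Lemma~4.3]{dong2020neumann} directly rather than unpacking the Moser iteration as you do), and then pass to $0<p<2$ via Giaquinta's iteration lemma. Your detailed discussion of scale invariance and uniformity of the doubling constant is more than the paper actually writes, but it is exactly what underlies the cited lemma; the heat-kernel bounds you mention are not needed here.
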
	
	
	\begin{proof} 
		Let $\tilde u$ the extension by $0$ of $u(s,\cdot)$ for $s<0$. As in \cite[Section 1, pp. 620-621]{Aronson},  $\tilde u$ is a  weak solution in $ ]-\infty,T]\times \HS$ of  
		\begin{align*}
			\begin{cases}
				\partial_t u=\mathcal Lu, & -\infty <t \leq T,\, z\in \HS\\[1ex]
				\ds\lim_{y\to 0}y^{c}\, D_y u(t,x,y)=0,&x\in\R^{N}.
			\end{cases}
		\end{align*}
		Let $r> 0$, $(t_0,z_0)\in[0,T]\times\ov{\R^{N+1}_+}$. We then apply  \cite[Lemma 4.3]{dong2020neumann} to $\tilde u$  thus getting
		$$\sup_{\mathcal I\left((t_0,z_0),\frac{r}{2}\right)}  |u|\leq C \Big(\fint_{\mathcal I\left((t_0,z),r\right)} |u(s,z_2)|^2\, ds\,d\mu(z_2)\Big)^{\frac 1 2}.$$ 
		By a standard iteration argument (see e.g. \cite[Theorem 5.4 and  Remark 2, pp. 80-82]{Giaquinta-book}), the previous inequality extends to any $0<p<2$
		$$\sup_{\mathcal I\left((t_0,z_0),\frac{r}{2}\right)}  |u|\leq C \Big(\fint_{\mathcal I\left((t_0,z),r\right)} |u(s,z_2)|^p\, ds\,d\mu(z_2)\Big)^{\frac 1 p}.$$ 
	\end{proof}

\begin{os}\label{equiv Lp norms}
Lemma	\ref{stima-sup} implies that all $L^p$-norms of the solutions for $0<p \leq \infty$ are equivalent on cylinders.

\end{os}
We  need also  Schauder estimates up to the boundary for local solutions of the equation $\partial_t u-\mathcal Lu=0$. 
 To this aim  for any  $0<\theta\leq 1$ and any parabolic cylinder $\mathcal I$ we define the  H\"older norm of a function $u$ in $\mathcal I$ as
\begin{align*}
		\|u\|_{C^{\frac\delta 2,\delta}\left(\mathcal I\right)}&:=\|u\|_{L^\infty\left(\mathcal I\right)}+[u]_{C^{\frac\delta 2,\delta}\left(\mathcal I\right)},\\[1.5ex]
		[u]_{C^{\frac\delta 2,\delta}\left(\mathcal I\right)}&:=\sup_{\substack{(t_1,z_1),(t_2,z_2)\in\mathcal I\\(t_1,z_1)\neq (t_2,z_2)}}
		\frac{|u(t_1,z_1)-u(t_2,z_2)|}{|t_1-t_2|^{\frac\delta 2}+|z_1-z_2|^\delta}.
\end{align*}
We  also define 
$$
\|u\|_{C^{1+\frac\delta 2,2+\delta}\left(\mathcal I\right)}=\|u\|_{C^{\frac\delta 2,\delta}\left(\mathcal I\right)}+
\|D_tu\|_{C^{\frac\delta 2,\delta}\left(\mathcal I\right)}+\|\nabla u\|_{C^{\frac\delta 2,\delta}\left(\mathcal I\right)}+
\|D^2u\|_{C^{\frac\delta 2,\delta}\left(\mathcal I\right)}.
$$

In the next lemma we consider parabolic cylinders contained in $]0,T]\times\ov{\R^{N+1}_+}$.
\begin{lem} \label{stima-Schauder}
 	Let  $(t_0,z_0)\in]0,T]\times\ov{\R^{N+1}_+}$ and consider a  cylinder $\mathcal I_r:=\mathcal I\left((t_0,z_0),r\right) \subset ]0,T]\times\ov{\R^{N+1}_+}$. Let $u\in C^{1,2}\left( \overline {\mathcal I_r}\right)$ be a solution of the  equation 
	\begin{align}\label{Schauder eq 1}
				\partial_t u(t,z)=\mathcal Lu(t,z), \qquad  (t,z)\in \mathcal I_r
	\end{align}
	Then for any $0<\delta\leq 1$, $u$ belongs to $C^{1+\frac\delta 2,2+\delta}\left(\mathcal I_\frac{r}2\right)$ and 
	\begin{align*}
		\|u\|_{C^{1+\frac\delta 2,2+\delta}\left(\mathcal I_{\frac r 2}\right)}+\left\|\frac{D_yu}y\right\|_{C^{\frac\delta 2,\delta}\left(\mathcal I_{\frac r 2}\right)}\leq C \Big(\fint_{\mathcal I_r} |u(s,z_2)|^2\, ds\,d\mu(z_2)\Big)^{\frac 1 2}
	\end{align*}
	for some positive constant $C$ depending on $(t_0,z_0)$ and $r$ but  independent of $u$. 
	
\end{lem}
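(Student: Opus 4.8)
The strategy is the classical freezing-coefficients/perturbation argument for Schauder estimates, adapted to the singular Bessel setting by transferring everything to a regularized equation where the Bessel term becomes a standard second-order operator in $c+1$ extra variables. First I would recall that the operator $B_y = D_{yy} + \frac cy D_y$ is, modulo a symmetrization, the radial part of the Laplacian in fictitious dimension $c+1$; more concretely, after the change of the unknown $u \mapsto y^{c/2}u$ one passes to an operator with the potential $\frac{c(c-2)}{4y^2}$, but since we only work on a cylinder $\mathcal I_r$ with $z_0=(x_0,y_0)$ fixed, the relevant dichotomy is whether $y_0>0$ (interior case) or $y_0=0$ (boundary case). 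In the interior case $y_0>0$ the coefficient $\frac cy$ is smooth and bounded with all derivatives bounded on $\overline{\mathcal I_r}$ (since $y$ stays away from $0$), so \eqref{Schauder eq 1} is a uniformly parabolic equation with smooth coefficients and the conclusion — including the bound on $\|D_yu/y\|_{C^{\delta/2,\delta}}$, which is just a smooth multiple of $\|D_yu\|_{C^{\delta/2,\delta}}$ — is the textbook interior Schauder estimate (e.g. Ladyzhenskaya–Solonnikov–Uraltseva), combined with Lemma \ref{stima-sup} to replace $\|u\|_{L^\infty(\mathcal I_r)}$ by the $L^2$-average, after a standard covering/shrinking-cylinders iteration.

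The genuinely singular case is $y_0=0$, i.e.\ a cylinder touching the boundary $\{y=0\}$. Here the plan is to invoke the boundary Schauder theory for Bessel/extension-type operators already available in the literature cited in the paper, namely \cite{dong2020neumann} (and the domain-characterization/maximal-regularity results of \cite{MNS-Singular-Half-Space, Negro-AlphaDirichlet}), which give precisely $C^{1+\delta/2,2+\delta}$ estimates up to $\{y=0\}$ under the weighted Neumann condition \eqref{Bound Cond}, together with the extra regularity of $y^{-1}D_yu$ — this last term is exactly the quantity that lies in the natural domain $W^{2,p}_{\mathcal N}(m)$ by \eqref{Definition W_N alpha}, and the parabolic counterpart of that membership is the Hölder bound on $D_yu/y$. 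In both cases the passage from the a priori estimate with $\|u\|_{L^\infty(\mathcal I_r)}$ on the right-hand side to the $L^2$-average on $\mathcal I_r$ is done exactly as in Lemma \ref{stima-sup}: apply the sup–estimate on a slightly larger cylinder $\mathcal I_{r'}$ with $r/2 < r'< r$, then absorb via the doubling property of $\mu$ (Lemma \ref{Misura palle}) which controls $V(z_0,r')/V(z_0,r)$.

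The main obstacle — or rather the main point requiring care — is the uniformity and the dependence of the constant. The statement only asks for $C$ depending on $(t_0,z_0)$ and $r$, so I would not need scale-invariant constants; nonetheless one must be careful that in the interior case the $C^{2+\delta}$-norm of the coefficient $\frac cy$ on $\overline{\mathcal I_r}$ blows up as $y_0 - r \to 0^+$, so the two regimes cannot be glued with a single constant — which is fine here, since the constant is allowed to depend on $(t_0,z_0)$ and $r$, and these determine whether we are in the interior or boundary regime and how close to $\{y=0\}$ we are. A second technical point is the boundary condition itself: by Lemma \ref{Boundary 0} and Lemma \ref{boundary}, any $C^{1,2}$ solution of \eqref{Schauder eq 1} automatically satisfies $D_yu(\cdot,0)=0$, so when $y_0=0$ the hypotheses of the boundary Schauder estimate from \cite{dong2020neumann} are met without extra assumptions. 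Assembling these ingredients — interior Schauder away from $\{y=0\}$, boundary Schauder from \cite{dong2020neumann} near $\{y=0\}$, and the $L^2$-averaging via Lemma \ref{stima-sup} and doubling — yields the claimed inequality.
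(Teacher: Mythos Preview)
Your dichotomy is mis-stated: the ``interior'' case is not $y_0>0$ but $y_0-r>0$, since $Q(z_0,r)=B(x_0,r)\times ](y_0-r)^+,y_0+r[$ reaches $\{y=0\}$ whenever $y_0\le r$. In that range your claim that $c/y$ is smooth and bounded on $\overline{\mathcal I_r}$ is false, and the classical Schauder argument you invoke does not apply.

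More seriously, in the boundary case you treat the full $C^{1+\delta/2,2+\delta}$ estimate \emph{and} the H\"older bound on $D_yu/y$ as a black box from \cite{dong2020neumann}, but the paper cites from there only the first-order result (H\"older control of $u$, $\nabla u$, $\partial_t u$) together with a Caccioppoli inequality. The second-order bounds are not quoted but derived: one first exploits translation invariance in $t$ and $x$ to apply the first-order estimate to $\nabla_x u$, obtaining $D^2_x u$ and $\nabla_x D_y u$; then the equation gives $B_yu=D_{yy}u+\frac cy D_yu=\partial_t u-\Delta_x u-2a\cdot\nabla_x D_y u$ in $C^{\delta/2,\delta}$. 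The key step you are missing is how to separate $D_{yy}u$ from $\frac cy D_yu$ near $y=0$: the paper integrates $D_y(y^cD_yu)=y^cB_yu$ from $0$ (using the weighted Neumann condition) to get the representation
\[
\frac{D_yu(t,x,y)}{y}=\int_0^1 \tau^c\,B_yu(t,x,\tau y)\,d\tau,
\]
from which the H\"older bound on $D_yu/y$ follows directly (since $c+1>0$), and $D_{yy}u$ is recovered by difference. Your sentence ``the parabolic counterpart of that membership is the H\"older bound on $D_yu/y$'' does not supply this argument; membership of $y^{-1}D_yu$ in $L^p_m$ is far from a $C^{\delta/2,\delta}$ bound up to $\{y=0\}$, and some concrete mechanism---like the integral formula above---is needed.
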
	
\begin{proof}
	We use  the interior estimates \cite[Proposition 4.4]{dong2020neumann} and the Caccioppoli inequalities \cite[ Lemma 4.2]{dong2020neumann} (which actually apply more generally to weak solutions of equation \eqref{Schauder eq 1}) thus obtaining, 	for some positive constant $C$,
	\begin{align} \label{eq1}
		\|u\|_{C^{\frac\delta 2,\delta}\left(\mathcal I_{\frac r 2}\right)}+\|\nabla u\|_{C^{\frac\delta 2,\delta}\left(\mathcal I_{\frac r 2}\right)}+\|\partial_t u\|_{C^{\frac\delta 2,\delta}\left(\mathcal I_{\frac r 2}\right)}\leq C \Big(\fint_{\mathcal I_r} |u(s,z_2)|^2\, ds\,d\mu(z_2)\Big)^{\frac 1 2}.
	\end{align}
Since the coefficients of the equation do not depend on $t$ and $x$, a standard argument based on difference quotients as in \cite[Proposition 4.4]{dong2020neumann}, proves that one can  formally differentiate the equation with respect to $t$ and $x$ thus proving that any derivative $D^{\alpha}_tD^{\beta}_xu$  is a weak solution of \eqref{Schauder eq 1}. 	 In  particular by applying the previous estimate to $\nabla_xu$ and the Caccioppoli inequalities \cite[ Lemma 4.2]{dong2020neumann} we also obtain
\begin{align*}
	\|D^2_xu\|_{C^{\frac\delta 2,\delta}\left(\mathcal I_{\frac r 2}\right)}+\|\nabla_x D_y u\|_{C^{\frac\delta 2,\delta}\left(\mathcal I_{\frac r 2}\right)}\leq C \Big(\fint_{\mathcal I_r} |u(s,z_2)|^2\, ds\,d\mu(z_2)\Big)^{\frac 1 2}.
\end{align*}
It remains to bound $D_{yy}u$, and $\frac{D_yu}{y}$. Using  equation \eqref{Schauder eq 1} we can write 
\begin{align*}
	B_yu:=D_{yy}u+c\frac{D_yu}y=\partial_tu-\left(\Delta_x u+2a\cdot \nabla_xD_yu \right)
\end{align*}
which, using the previous estimates, implies
\begin{align}\label{Schauder eq 2}
	\|B_yu\|_{C^{\frac\delta 2,\delta}\left(\mathcal I_{\frac r 2}\right)}\leq C \Big(\fint_{\mathcal I_r} |u(s,z_2)|^2\, ds\,d\mu(z_2)\Big)^{\frac 1 2}.
\end{align}
If $y_0-r >0$, then $y \geq \frac r 2$ in $\mathcal I_{\frac r 2}$ and the estimate of the H\"older norm of $\frac{D_y u}{y}$ follows from \eqref{eq1}.  Together with \eqref{Schauder eq 2} this gives the estimate for the H\"older norm of $D_{yy} u$ in $\mathcal I_{\frac r 2}$. 

Finally, if $y_0-r \leq 0$, then $(x,0) \in \mathcal I_r$ when $(x,y) \in \mathcal I_r$.
Since $D_{y}\left(y^{c}D_yu\right)=y^{c}B_yu$ and $\ds\lim_{y\to 0}y^cD_yu=0$, we have
\begin{align*}
	y^{c}D_yu(t,x,y)=\int_0^y s^cB_yu(t,x,s)ds\stackrel{{\tiny \tau y=s}}{=}y^{c+1}\int_0^1 \tau^cB_yu(t,x,\tau y)d\tau,\qquad (t,x,y)\in \mathcal I_r
\end{align*}
which implies
\begin{align*}
	\frac{D_yu(t,x,y)}{y}=\int_0^1 \tau^cB_yu(t,x,\tau y)d\tau,\qquad (t,x,y)\in \mathcal I_r.
\end{align*}

Using \eqref{Schauder eq 2}, the last equation implies
\begin{align*}
\left\|\frac{D_yu}y\right\|_{C^{\frac\delta 2,\delta}\left(\mathcal I_{\frac r 2}\right)}&\leq \int_0^1 \tau^c\left\|B_yu(\cdot,\cdot,\tau \cdot)\right\|_{C^{\frac\delta 2,\delta}\left(\mathcal I_{\frac r 2}\right)}d\tau
\leq C\left (\int_0^1 \tau^{c} \,d\tau \right )\Big(\fint_{\mathcal I_r} |u(s,z_2)|^2\, ds\,d\mu(z_2)\Big)^{\frac 1 2}.
\end{align*}
By difference, the last inequality implies the estimate for $D_{yy}u$.
\end{proof}

	The previous mean value inequalities allow to prove  the following  generalization of the uniqueness Theorem \ref{Tychonov} which allows to replace pointwise  growth conditions with  some integral  growth conditions.

\begin{lem}\label{equiv growth condition}
	Let  $ u\in C^{1,2}\left(]0,T]\times\ov{\R^{N+1}_+}\right)\cap C\left([0,T]\times\ov{\R^{N+1}_+}\right)$ be a  solution of the  Cauchy problem 
	\begin{align*}	\begin{cases}
			\partial_t u=\mathcal Lu, & 0<t \leq T,\, z\in \ov{\R^{N+1}_+}\\[1ex]
			u(0,z) =0, & z\in\ov{\R^{N+1}_+}.\\[1ex]
		\end{cases}
	\end{align*}
Let $0<p<\infty$. The following  growth conditions are equivalent:
\begin{itemize}
	\item[(i)] $\ds |u(t,z)|\leq C e^{\beta|z|^2}$  for some positive constants $\beta,C>0$; \smallskip 
	\item[(ii)] $\ds \int_0^T\int_{\HS}e^{-\beta|z|^2}|u(t,z)|^p\,dt\, d\mu(z)\leq C$  for   some    positive constants $\beta,C>0$;\smallskip 
\item[(iii)] $\ds\int_0^T\int_{Q(0,r)}|u(t,z)|^p\,dt\, d\mu(z)\leq Ce^{\beta r^2},\ r>0$, for some  positive constants   $\beta,C>0$.

\end{itemize}
\end{lem}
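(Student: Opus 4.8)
The plan is to prove the chain of implications $(i)\Rightarrow(ii)\Rightarrow(iii)\Rightarrow(i)$, using the mean value inequality of Lemma \ref{stima-sup} to pass from integral bounds back to pointwise ones. The implication $(i)\Rightarrow(ii)$ is the easy one: if $|u(t,z)|\le Ce^{\beta|z|^2}$, then $e^{-\beta'|z|^2}|u(t,z)|^p\le C^p e^{(p\beta-\beta')|z|^2}$, and choosing $\beta'>p\beta$ large enough the exponential is integrable against $d\mu=y^c\,dx\,dy$ over $\HS$ (note $c+1>0$ makes $y^c$ locally integrable near $y=0$, and the Gaussian decay handles infinity); integrating in $t$ over $[0,T]$ gives a finite constant. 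Similarly $(ii)\Rightarrow(iii)$ is immediate: on $Q(0,r)$ one has $e^{-\beta|z|^2}\ge e^{-\beta r^2}$ (up to adjusting constants since $|z|\le \sqrt{N+1}\,r$ on the cylinder, or more simply $|z|^2\le (r+r)^2$), hence
\begin{align*}
\int_0^T\int_{Q(0,r)}|u|^p\,dt\,d\mu\le e^{\beta' r^2}\int_0^T\int_{Q(0,r)}e^{-\beta'|z|^2}|u|^p\,dt\,d\mu\le Ce^{\beta' r^2},
\end{align*}
with $\beta'$ a suitable multiple of $\beta$.

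The substantive implication is $(iii)\Rightarrow(i)$, where I would use Lemma \ref{stima-sup}. Fix $(t_0,z_0)\in[0,T]\times\ov{\HS}$ with $z_0=(x_0,y_0)$ and set $r=1$ (or any fixed radius); the parabolic cylinder $\mathcal I((t_0,z_0),1)$ is contained in $[t_0-1,T]\times Q(z_0,1)$, and after extending $u$ by $0$ for negative times, Lemma \ref{stima-sup} with $p$ gives
\begin{align*}
|u(t_0,z_0)|\le \sup_{\mathcal I((t_0,z_0),\frac12)}|u|\le C\Big(\fint_{\mathcal I((t_0,z_0),1)}|u(s,z_2)|^p\,ds\,d\mu(z_2)\Big)^{\frac1p}.
\end{align*}
The measure $\mu(\mathcal I((t_0,z_0),1))=V(z_0,1)$ is bounded below by a positive constant uniformly in $z_0$ when $y_0$ is bounded, but grows like $y_0^c$ for large $y_0$ if $c>0$ — in any case it is bounded below by Lemma \ref{stima V locale} only on bounded $y$-ranges, so I must be slightly careful: by Lemma \ref{Misura palle}, $V(z_0,1)\ge c_0\min(1,y_0^{c})$ if $c<0$ and $V(z_0,1)\ge c_0$ if $c\ge 0$, so $V(z_0,1)^{-1}$ grows at most polynomially in $y_0$, which is harmless against the exponential we are about to produce. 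The cylinder $Q(z_0,1)$ is contained in $Q(0,|x_0|+1)\times$-range, i.e. $Q(z_0,1)\subset Q(0,R)$ with $R=|z_0|+1$; hence the numerator $\int_{t_0-1}^{t_0}\int_{Q(z_0,1)}|u|^p\,ds\,d\mu\le \int_0^T\int_{Q(0,R)}|u|^p\,ds\,d\mu\le Ce^{\beta R^2}=Ce^{\beta(|z_0|+1)^2}$ by hypothesis $(iii)$ (extending $u$ by $0$ for $s<0$ does not add anything on $[0,T]$). Combining, $|u(t_0,z_0)|^p\le C V(z_0,1)^{-1}e^{\beta(|z_0|+1)^2}\le C' e^{\beta'|z_0|^2}$ after absorbing the polynomial factor and the $(|z_0|+1)^2$ into a slightly larger exponent; taking $p$-th roots yields $(i)$.

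The main obstacle — really the only point needing care — is the uniformity of the constant in Lemma \ref{stima-sup} and the behaviour of $V(z_0,1)$ as $y_0\to\infty$ when $c>0$: one must check that dividing by the volume of the cylinder does not destroy the estimate, which it does not because $V(z_0,1)^{-1}$ is at most polynomial in $|z_0|$ and is therefore absorbed by enlarging $\beta$ to any $\beta'>\beta$. A secondary routine point is the passage $Q(z_0,1)\subset Q(0,|z_0|+1)$ together with extending $u(s,\cdot)$ by zero for $s<0$ so that Lemma \ref{stima-sup} applies even when $t_0$ is close to $0$; this is exactly the set-up already recorded in the statement of that lemma. Everything else is elementary manipulation of Gaussians against the doubling measure $d\mu$.
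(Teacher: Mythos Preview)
Your proof is correct and follows the same overall scheme as the paper: the chain $(i)\Rightarrow(ii)\Rightarrow(iii)\Rightarrow(i)$, with the last implication handled via the mean value inequality of Lemma~\ref{stima-sup}. The only genuine difference is in the execution of $(iii)\Rightarrow(i)$: the paper applies Lemma~\ref{stima-sup} on a cylinder of radius $r=1+|z|$ (growing with the base point), for which $V(z,1+|z|)\simeq(1+|z|)^{N+1+c}\geq 1$ automatically, so no volume factor appears; you instead use a cylinder of fixed radius $r=1$ and then observe that $V(z_0,1)^{-1}$ grows at most polynomially in $y_0$ (indeed only when $c<0$, not when $c>0$ as you first suggest) and is therefore absorbed by enlarging $\beta$. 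Both choices work; the paper's avoids the case analysis on the sign of $c$, while yours is arguably more elementary in that it keeps the cylinder size fixed. One small point to make explicit: Lemma~\ref{stima-sup} is stated only for $0<p\leq 2$, so for $p>2$ you should note (via H\"older on the normalized measure, or via Remark~\ref{equiv Lp norms} as the paper does) that the mean value inequality extends trivially to all $p$.
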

\begin{proof} We start by observing that (i) implies (ii) since, taking $\beta'>\beta p$, one has
\begin{align*}
	\int_0^T\int_{\HS}e^{-\beta' |z|^2}|u(t,z)|^p\,dt\, d\mu(z)\leq C^p \int_0^T\int_{\HS}e^{-(\beta'-\beta p) |z|^2}\,dt\, d\mu(z)<+\infty.
\end{align*}	
On the other hand  (ii) implies   (iii) since
	\begin{align*}
	\int_0^T\int_{Q(0,r)}|u(t,z)|^p\,dt\, d\mu(z)\leq e^{\beta r^2} \int_0^T\int_{\HS}e^{-\beta |z|^2}|u(t,z)|^p\,dt\, d\mu(z).
	\end{align*} 
 It is then  sufficient to prove that (iii) implies (i). By remark \ref{equiv Lp norms} and the doubling condition of Lemma \ref{Misura palle}, we can suppose without any loss of generality $p=1$. 	

Let us fix $t\in [0,T[$ and $z\in\HS$ and choose $t_0\in[0,T[$ such that $t_0-\frac{(1+|z|)^2}{4}<t<t_0$; in particular $(t,z)\in \mathcal I\left((t_0,z),\frac{1+|z|}2\right)$. We  observe that 
 \begin{align*}
\mathcal I\Big((t_0,z),1+|z|\Big)\subseteq \left(t_0-(1+|z|)^2,T\right)\times Q\Big(0,1+2|z|\Big)
 \end{align*}
and that by Lemma \ref{Misura palle} we have
\begin{align*}
	\left|\mathcal I\Big((t_0,z),1+|z|\Big)\right|=(1+|z|)^2V(z,1+|z|)\simeq (1+|z|)^{2+N+1+c}\geq 1. 
\end{align*}
By extending $u$ to $0$ for  negative times and by using (i) with $r=1+2|z|$ and the previous relations we then obtain
 \begin{align*}
 \fint_{\mathcal I\left((t_0,z),1+|z|\right)}| u(s,z_2)|ds\,d\nu(z_2) &=\frac{1}{	\left|\mathcal I\Big((t_0,z),1+|z|\Big)\right|}\int_{\mathcal I\left((t_0,z),1+|z|\right)} |u(s,z_2)|\, ds\,d\nu(z_2)\\[2ex]
 &\leq  C\int_0^{T}\int_{Q\left(0,1+2|z|\right)} |u(s,z_2)|\, ds\,d\mu(z_2)\leq 
  C  e^{\beta |z|^2}.
 \end{align*}
 By applying Lemma \ref{stima-sup} in $\mathcal I\left((t_0,z),1+|z|\right)$,
 we then deduce
 $$|u(t,z)|\leq   C e^{\beta|z|^2}, \qquad  0\leq  t<T.$$
\end{proof}

\begin{teo}\label{uniqueness tacklind}
	Let  $ u\in C^{1,2}\left(]0,T]\times\ov{\R^{N+1}_+}\right)\cap C\left([0,T]\times\ov{\R^{N+1}_+}\right)$ be a  solution of the  Cauchy problem 
	\begin{align*}	\begin{cases}
			\partial_t u=\mathcal Lu, &0<t \leq T,\, z\in \ov{\R^{N+1}_+}\\[1ex]
			u(0,z) =0, & z\in\ov{\R^{N+1}_+}\\[1ex]
		\end{cases}
	\end{align*}
	satisfying one of the equivalent growth conditions of Lemma \ref{equiv growth condition}.
	Then $u=0$ in $[0,T]\times\ov{\R^{N+1}_+}$.
\end{teo}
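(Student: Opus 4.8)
The plan is to deduce this theorem as a corollary of the pointwise-growth uniqueness result Theorem \ref{Tychonov}, using the equivalence of growth conditions established in Lemma \ref{equiv growth condition}. Indeed, whichever of the conditions (i), (ii), (iii) the solution $u$ is assumed to satisfy, Lemma \ref{equiv growth condition} upgrades it to the pointwise bound $|u(t,z)|\le Ce^{\beta|z|^2}$ of condition (i) for suitable $\beta,C>0$. The Neumann boundary condition $D_yu(t,x,0)=0$ is not an extra hypothesis to be checked: it follows automatically from the assumed regularity $u\in C^{1,2}(]0,T]\times\ov{\R^{N+1}_+})$ together with the equation $\partial_t u=\mathcal Lu$, by Lemma \ref{boundary}. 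Hence $u$ satisfies all the hypotheses of Theorem \ref{Tychonov}, and that theorem (whose proof already treats $u$ and $-u$ symmetrically through the maximum principle of Proposition \ref{maximum} and the barrier $v$) yields $u\equiv 0$ on $[0,T]\times\ov{\R^{N+1}_+}$.

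So the actual proof is essentially one line: invoke Lemma \ref{equiv growth condition} to get (i), then invoke Theorem \ref{Tychonov}. The substantive work sits upstream, in the implication (iii)$\Rightarrow$(i) of Lemma \ref{equiv growth condition}: there one fixes $(t,z)$, extends $u$ by zero to negative times as in Lemma \ref{stima-sup}, applies the parabolic mean value inequality of that lemma on a cylinder $\mathcal I\big((t_0,z),1+|z|\big)$ centred slightly after time $t$, bounds the measure of that cylinder from below by a constant via the doubling estimate of Lemma \ref{Misura palle}, and controls the $L^1$-integral of $u$ over it by $Ce^{\beta|z|^2}$ using (iii). The implications (i)$\Rightarrow$(ii)$\Rightarrow$(iii) are routine (integrate the exponential bound; bound $e^{-\beta|z|^2}$ below on a ball). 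Consequently the only genuine obstacle — converting integral information into pointwise control near $t=0$ — has already been dealt with by the mean value inequality Lemma \ref{stima-sup} and the doubling property, and Theorem \ref{uniqueness tacklind} follows immediately; no new argument is needed here beyond citing these results.

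A direct alternative would be to run an $L^p$ energy/Duhamel argument against the kernel estimates of Proposition \ref{kernel tilde}, but the reduction to Theorem \ref{Tychonov} via Lemma \ref{equiv growth condition} is shorter and reuses exactly the machinery already developed, so that is the route I would take.
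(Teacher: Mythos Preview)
Your proposal is correct and matches the paper's own proof exactly: the paper's argument is the single line ``This follows from Lemma~\ref{equiv growth condition} and Theorem~\ref{Tychonov}.'' Your additional commentary on why the boundary condition is automatic and on where the real work lies (the (iii)$\Rightarrow$(i) step via the mean value inequality) is accurate but not needed for the proof itself.
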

\begin{proof} This follows follows from Lemma  \ref{equiv growth condition} and  Theorem  \ref{Tychonov}. 
\end{proof}

	\section{Uniqueness of positive solutions}
We now prove  a generalization of a result proved by Widder \cite{Widder}  concerning the uniqueness of non-negative solutions for the heat equation. We first  prove  the minimality of the semigroup  $e^{t\mathcal L}u_0$ among the set of all positive  solutions of the Cauchy problem \eqref{CP L}.
	\begin{prop}\label{domination}
		Let  $0 \leq u\in C^{1,2}\left(]0,T]\times\ov{\R^{N+1}_+}\right)\cap C\left([0,T]\times\ov{\R^{N+1}_+}\right)$ be a solution of the  equation
		\begin{align*}	
			\partial_t u=\mathcal Lu\quad & {\rm in}\ ]0,T]\times\ov{\R^{N+1}_+}.\\[1ex]
		\end{align*}
		Then 
		$$v(t,z)=\int_{\HS}p_{\mathcal L}(t,z,z_2)u(0,z_2)\, d\mu(z_2)\leq u(t,z),\qquad (t,z)\in[0,T]\times\ov{\R^{N+1}_+}.$$
		Moreover  $v\in C^{1,2}\left(]0,T]\times\ov{\R^{N+1}_+}\right)\cap C\left([0,T]\times\ov{\R^{N+1}_+}\right)$, $v_t=\mathcal L v$ in $]0,T]\times\ov{\R^{N+1}_+}$ and $v(0,z)=u(0,z)$.
	\end{prop}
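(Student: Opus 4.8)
The plan is to exhaust the half-space by bounded cylinders, solve an auxiliary Cauchy–Dirichlet problem in each one with the same initial datum, and show that the solution of the whole-space semigroup is the increasing limit of these truncated solutions; comparison with $u$ follows from the maximum principle on each bounded piece. Concretely, for $R>0$ let $u_{0,R}=u(0,\cdot)\wedge R$ restricted to $Q(0,R)$ (or better, $u(0,\cdot)\chi_{Q(0,R)}$, truncated and made continuous if needed) and let $v_R$ solve $\partial_t v_R=\mathcal L v_R$ in $]0,T]\times Q(0,R)$ with $v_R=0$ on the lateral boundary $\{|x|=R\}\cup\{y=R\}$, the weighted Neumann condition $\lim_{y\to0}y^cD_yv_R=0$ at $y=0$, and $v_R(0,\cdot)=u_{0,R}$. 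Since $0\le v_R\le u$ on the parabolic boundary of $]0,T]\times Q(0,R)$ (on the initial slice because $u_{0,R}\le u(0,\cdot)$, on the lateral boundary because $v_R=0\le u$, and the behaviour at $y=0$ is handled exactly as in Proposition \ref{maximum} using $c+1>0$), the maximum principle of Proposition \ref{maximum}, applied on the cylinder $Q(0,R)$ rather than on $\HS$, gives $v_R\le u$ on $[0,T]\times\ov{Q(0,R)}$. Monotonicity in $R$ (again by the maximum principle, comparing $v_{R}$ and $v_{R'}$ for $R<R'$ on the smaller cylinder) shows $v_R\uparrow$ some limit $v$ as $R\to\infty$, and $0\le v\le u$.

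Next I would identify this limit with the semigroup integral. The point is that $v_R$ is given by the heat kernel $p_{\mathcal L,R}$ of $\mathcal L$ with Dirichlet conditions on $Q(0,R)$ and Neumann at $y=0$, and $p_{\mathcal L,R}\uparrow p_{\mathcal L}$ as $R\to\infty$ — this is a standard monotone approximation of the minimal heat kernel, and one checks $p_{\mathcal L,R}\le p_{\mathcal L}$ once more by the maximum principle. Hence by monotone convergence
\begin{align*}
v(t,z)=\lim_{R\to\infty}v_R(t,z)=\lim_{R\to\infty}\int_{Q(0,R)}p_{\mathcal L,R}(t,z,z_2)u_{0,R}(z_2)\,d\mu(z_2)=\int_{\HS}p_{\mathcal L}(t,z,z_2)u(0,z_2)\,d\mu(z_2),
\end{align*}
which is exactly the asserted inequality $v\le u$. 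One should note this integral is finite: by the Gaussian upper bound of Proposition \ref{kernel tilde} together with Lemma \ref{stima V locale} and the local boundedness of the continuous datum $u(0,\cdot)$, the integral converges for every $(t,z)$ with $t>0$ (and trivially for $t=0$).

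Finally, for the regularity and the parabolic equation for $v$: since $v=e^{t\mathcal L}(u(0,\cdot))$ in the sense of the integral representation and $u(0,\cdot)\in L^2_{loc}$ with the above growth, $v$ is a weak solution of $\partial_t v=\mathcal Lv$, hence $v\in C^{1,2}\left(]0,T]\times\ov{\HS}\right)$ by the interior-up-to-the-boundary regularity of \cite[Section 4.1]{dong2020neumann}, and the Neumann condition holds by Lemma \ref{boundary}. Continuity at $t=0$ with $v(0,\cdot)=u(0,\cdot)$ is the delicate point: for $u(0,\cdot)$ continuous with compact support it is Lemma \ref{regularity of semigroup}, and in general one writes $u(0,\cdot)=\varphi+\psi$ near a fixed point $z$ with $\varphi\in C_c(\ov{\HS})$ agreeing with $u(0,\cdot)$ on a neighbourhood $Q(z,\rho)$ and estimates the contribution of $\psi$ by the Gaussian kernel bound: since $\psi$ vanishes on $Q(z,\rho)$, the factor $\exp(-|z-z_2|^2/(kt))$ forces $\int p_{\mathcal L}(t,z,z_2)|\psi(z_2)|\,d\mu(z_2)\to0$ as $t\to0^+$, using the at-most-$e^{\beta|z_2|^2}$ growth of $\psi=u(0,\cdot)-\varphi$ and Lemma \ref{stima V locale} to control $V(z_2,\sqrt t)^{-1}$ for small $t$. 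Thus $v(t,z)\to\varphi(z)=u(0,z)$.

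I expect the main obstacle to be the rigorous justification of the monotone convergence $p_{\mathcal L,R}\uparrow p_{\mathcal L}$ of the Dirichlet-truncated kernels to the global one — i.e. that the minimal (semigroup) kernel is indeed obtained by this exhaustion — together with making the comparison $v_R\le u$ fully rigorous at the singular boundary $y=0$; both hinge on having a maximum principle robust enough to apply on bounded cylinders with the weighted Neumann condition, which is precisely what Proposition \ref{maximum} (read locally) provides.
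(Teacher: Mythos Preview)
Your overall strategy is sound but takes a genuinely different route from the paper. You truncate the \emph{domain} (Dirichlet on the lateral boundary of $Q(0,R)$, Neumann at $y=0$) and rely on the monotone exhaustion $p_{\mathcal L,R}\uparrow p_{\mathcal L}$ of mixed-boundary kernels. The paper instead truncates only the \emph{initial datum}: it sets $v_r=e^{t\mathcal L}(\eta_r u_0)$ with a smooth cutoff $\eta_r$ and works exclusively with the already-available whole-space kernel. The comparison $v_r\le u$ is then obtained not on a bounded cylinder but on all of $\HS$, by using the Gaussian upper bound of Proposition~\ref{kernel tilde} together with Lemma~\ref{stima V locale} to show that $v_r(t,z)\to 0$ as $|z|\to\infty$ uniformly in $t\in[0,T]$, so that Proposition~\ref{maximum} applies directly to $w=v_r-\varepsilon-u$. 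This avoids entirely the construction of the mixed Dirichlet--Neumann semigroup on cylinders and the identification $\lim_R p_{\mathcal L,R}=p_{\mathcal L}$, which, as you yourself flag, is the non-trivial step in your scheme (standard for regular operators, but here you would have to handle the degeneracy at $y=0$ and check that the semigroup kernel of Theorem~\ref{true.kernel} really is the minimal one).

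Two further points. First, your remark that the integral defining $v$ converges ``by the Gaussian upper bound \ldots\ and the local boundedness of the continuous datum'' is not a justification: $u(0,\cdot)$ is merely continuous on $\ov{\HS}$ and may grow arbitrarily fast, so no kernel bound alone controls $\int p_{\mathcal L}(t,z,z_2)u(0,z_2)\,d\mu(z_2)$. Finiteness comes only \emph{a posteriori} from $v\le u$. Second, and for the same reason, your argument for continuity at $t=0$ has a real gap: you invoke an ``at-most-$e^{\beta|z_2|^2}$ growth of $\psi=u(0,\cdot)-\varphi$'' that is nowhere assumed. The paper fixes both issues in one stroke by the sandwich $v_r\le v\le u$: since $v_r(t,\cdot)\to\eta_r u_0$ and $u(t,\cdot)\to u_0$ as $t\to 0$, one gets $v(t,z)\to u_0(z)$ on each $Q(0,r)$ without any growth hypothesis. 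For the $C^{1,2}$ regularity the paper does not appeal to $v$ being a weak solution (which would itself need justification), but shows via the Schauder estimate of Lemma~\ref{stima-Schauder} that $(v_r)_r$ is Cauchy in $C^{1+\delta/2,2+\delta}$ on each interior cylinder, hence $v_r\to v$ in that norm and $v$ inherits the equation.
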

	\begin{proof} Set $u_0(\cdot)=u(0,\cdot)$. For a fixed $r>0$ let $\eta_r$ be a smooth function such that $0\leq \eta_r\leq 1$, $\eta_r=1$ on $Q(0,r)$ and with support contained in $Q(0,2r)$. Since $\eta_r u_0\in C_c\left(\ov{\HS}\right)$ we may apply Lemma \ref{regularity of semigroup} to obtain that 
		$$v_r(t,z):=e^{t\mathcal L}(\eta_r u_0)(t,z)=\int_{\HS}p_{\mathcal L}(t,z,z_2)\eta_r(z_2)u(0,z_2)\, d\mu(z_2)$$
belongs to $ C^{1,2}\left(]0,T]\times\ov{\R^{N+1}_+}\right)\cap C\left([0,T]\times\ov{\R^{N+1}_+}\right)$.
		It immediately follows that $v_r\to v$ as $r$ goes to infinity by monotone convergence. By  the   kernel estimates in Proposition \ref{kernel tilde} and by Lemma \ref{stima V locale} one has, for some positive constants  $C$ depending on $r$ and $T$ (and which may vary in each occurrence), 
		\begin{align*}
			p_{{\mathcal  L}}(t,z,z_2)
			&\leq \frac{C}{V\left(z_2,\sqrt t\right)}\,\exp\left(-\dfrac{|z-z_2|^2}{kt}\right)\\[1ex]&
			\leq C t^{-\frac{N+1+c^+}{2}}\,\exp\left(-\dfrac{|z-z_2|^2}{kt}\right),\qquad 0<t\leq T,\quad |z_2|\leq 2r .
		\end{align*}	
	 It follows that, if $|z_2|\leq 2r$, then by elementary calculation
			\begin{align}\label{domination eq 1}
				\sup_{0< t\leq T}p_{\mathcal L}(t,z,z_2)\leq C	\sup_{t>0}\Big[t^{-\frac{N+1+c^+}{2}}\,\exp\left(-\dfrac{|z-z_2|^2}{kt}\right)\Big]= \frac{C}{|z-z_2|^{N+1+c^+}}.
			\end{align} 
			In particular if $|z|>4r$ then $|z-z_2|\geq  \frac{|z|}{2}$ and   
		\begin{align}\label{domination eq 2}
			\sup_{0< t\leq T}p_{\mathcal L}(t,z,z_2)\leq \frac{C}{ |z|^{N+1+c^+}}.
		\end{align} 
			Therefore
 			$$v_r(t,z)= \int_{Q(0,2r)}p_{\mathcal L}(t,z,z_2)\eta_r(z_2)u(0,z_2)\, d\mu(z_2)$$
	 tends to $0$ as $|z|$ goes to infinity, uniformly in $[0,T]$. Given $\eps>0$, we can find some $R>0$ such that
		\begin{align*}
			w(t,z):=v_r(t,z)-\eps -u(t,z)\leq 0,\qquad t\in [0,T],\ |z|\geq R.
		\end{align*}  
		Moreover $\partial_t w-\mathcal L w=0$,  $w(0,z)\leq 0$.
		Then by applying Proposition \ref{maximum}  we get $w \leq 0$ in $[0,T]\times\ov{\HS}$. Letting $\eps \to 0$ and then $r \to \infty$ we obtain $v(t,z) \leq u(t,z)$ and, in particular, $v$ is finite everywhere.

	Let us finally  prove that $v\in C^{1,2}\left(]0,T]\times\ov{\R^{N+1}_+}\right)\cap C\left([0,T]\times\ov{\R^{N+1}_+}\right)$ and solves the same Cauchy problem. The continuity at $t=0$ follows since, by the previous step we have
	\begin{align}\label{domination eq 3}
		v_r\leq v\leq u
	\end{align}
which implies $\lim_{t\to 0}v(t,z)=u_0(z)$ in $Q_r$. Let us now fix $(t_0,z_0)\in ]0,T]\times \HS$ and  a sufficiently small $R_0>0$ such that 
\begin{align*}
	\mathcal I_{R_0}:=\mathcal I\left((t_0,z_0),R_0\right)\subseteq ]0,T]\times  \HS.
\end{align*} We observe that, from \eqref{domination eq 3}, we also have $v_r\to v$ in $L^2\left(\mathcal I_{R_0},y^cdtdz\right)$ by dominated convergence with $u_{\vert_{\mathcal I_{R_0}}}$.  By Lemma \ref{stima-Schauder},  we have for $0<\delta\leq 1$ and  any $r_1,r_2>0$
\begin{align*}
		\|v_{r_1}-v_{r_2}\|_{C^{1+\frac\delta 2,2+\delta}\left(\mathcal I_{\frac {R_0} 2}\right)}\leq C \|v_{r_1}-v_{r_2}\|_{L^2\left(\mathcal I_{R_0},y^cdtdz\right)}
\end{align*}
which shows that $\left(v_r\right)_{r>0}$ satisfies the Cauchy condition  in $C^{1+\frac\delta 2,2+\delta}\left(\mathcal I_{\frac {R_0} 2}\right)$. This implies that $v_r\to v$ in $C^{1+\frac\delta 2,2+\delta}\left(\mathcal I_{\frac {R_0} 2}\right)$. In particular  $v\in C^{1,2}(]0,T]\times\ov{\R^{N+1}_+})$ and satisfies  the equation $\partial_t v=\mathcal L v$. 
	\end{proof}

	\begin{teo}\label{uniqueness}
		Let  $0\leq u\in C^{1,2}\left(]0,T]\times\ov{\R^{N+1}_+}\right)\cap C\left([0,T]\times\ov{\R^{N+1}_+}\right)$ be a solution of the  Cauchy problem 
		\begin{align*}	\begin{cases}
				\partial_t u=\mathcal Lu, &0<t \leq T,\, z\in \ov{\R^{N+1}_+}\\[1ex]
				u(0,z) =0, & z\in\ov{\R^{N+1}_+}.\\[1ex]
			\end{cases}
		\end{align*}
		Then $u=0$ in $[0,T]\times\ov{\R^{N+1}_+}$.
	\end{teo}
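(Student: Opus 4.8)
The plan is to reduce the statement to the integral uniqueness Theorem \ref{uniqueness tacklind}, by showing that a nonnegative solution with $u(0,\cdot)=0$ automatically satisfies one of the equivalent growth conditions of Lemma \ref{equiv growth condition}. The key tool is Proposition \ref{domination}: since the coefficients of $\mathcal L$ do not depend on time, for $0<s<T$ the translate $\tilde u(t,z):=u(s+t,z)$ is again a nonnegative solution on $[0,T-s]$ with the same regularity, so Proposition \ref{domination} applied to it yields
\begin{align*}
\int_{\HS}p_{\mathcal L}(t-s,z,z')\,u(s,z')\,d\mu(z')\le u(t,z),\qquad s\le t\le T,\ z\in\overline{\HS}.
\end{align*}

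Next I would fix a point $z_*\in\HS$ and, given $T'<T$, a time $t_0\in(T',T)$; write $M:=u(t_0,z_*)<\infty$ and $\delta:=1-T'/t_0\in(0,1)$. Evaluating the displayed inequality at $(t_0,z_*)$ and inserting the \emph{lower} Gaussian bound of Proposition \ref{kernel tilde}, $p_{\mathcal L}(t_0-s,z_*,z')\ge c_0\,V(z_*,\sqrt{t_0-s})^{-1}\exp\big(-|z_*-z'|^2/(k(t_0-s))\big)$, one uses that for $0<s\le T'$ one has $t_0-s\ge\delta t_0$ and $V(z_*,\sqrt{t_0-s})\le V(z_*,\sqrt{t_0})$ (monotonicity of $V$ in the radius) to obtain, with $\beta:=2/(k\delta t_0)$ and a constant $C_{T'}$ depending only on $T'$, $z_*$ and $M$,
\begin{align*}
\int_{\HS}e^{-\beta|z'|^2}\,u(s,z')\,d\mu(z')\le C_{T'}\qquad\text{for every }0<s\le T'.
\end{align*}
Integrating in $s$ over $[0,T']$ shows that $u$ satisfies condition (ii) of Lemma \ref{equiv growth condition} (with $p=1$) on the strip $[0,T']\times\overline{\HS}$.

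Finally, since $u(0,\cdot)=0$, Theorem \ref{uniqueness tacklind} applied on the strip $[0,T']$ gives $u\equiv0$ there; since $T'<T$ is arbitrary, $u\equiv0$ on $[0,T)\times\overline{\HS}$, and then $u\equiv0$ on $[0,T]\times\overline{\HS}$ by continuity up to $t=T$.

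I expect the middle step to be the only genuine obstacle: turning the minimality estimate of Proposition \ref{domination} into a bound on the Gaussian-weighted mass of $u(s,\cdot)$ that is \emph{uniform} as $s\downarrow0$. This is precisely the point where the sharp \emph{lower} heat kernel bound (Theorem \ref{true.kernel}, Proposition \ref{kernel tilde}) is indispensable — the upper bound alone would be of no help here — and some care is needed so that the constants in the lower bound remain controlled while the time lag $t_0-s$ ranges over the bounded interval $[\delta t_0,t_0]$, which is the reason for working on $[0,T']$ with $T'<T$ rather than directly on $[0,T]$.
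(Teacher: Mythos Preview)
Your proposal is correct and follows essentially the same route as the paper: apply Proposition \ref{domination} (with a time shift) together with the lower Gaussian kernel bound of Proposition \ref{kernel tilde} to obtain a uniform-in-$s$ weighted $L^1$ estimate on $u(s,\cdot)$ over $[0,T']$, integrate in $s$ to verify condition (ii) of Lemma \ref{equiv growth condition}, invoke Theorem \ref{uniqueness tacklind}, and let $T'\uparrow T$. The only cosmetic difference is that the paper anchors at $t_0=T$ and restricts to $s<T-\eps$, while you pick $t_0\in(T',T)$; both serve the same purpose of keeping the time lag bounded away from zero.
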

	
	\begin{proof} 
		Let  $\eps>0$, $0\leq s<T-\eps$ and fix $\ov{z}\in \R^{N+1}_+$. By Proposition  \ref{domination} 
		$$\int_{\HS} p_{\mathcal L}(T-s,\ov z,z_2)u(s,z_2)\, d\mu(z_2) \leq u(T,\ov{z}).$$
		By the lower estimates of Proposition \ref{kernel tilde}, we get
		$$\frac{C}{V(\ov{z}, \sqrt{\epsilon})}\int_{\HS} e^{-\frac{|z_2-\ov{z}|^2}{k\epsilon}}u(s,z_2)\, d\mu(z_2) \leq \int_{\HS} p_{\mathcal L}(T-s,\ov z,z_2)u(s,z_2)\,d\mu(z_2) \leq u(T,\ov{z})$$ for all $s<T-\eps$.
		It follows that for all $s<T-\eps$ and suitable $C_\epsilon, a_\eps>0$
		$$C_\eps\int_{\HS} e^{-a_\eps|z_2|^2}u(s,z_2)\, d\mu(z_2)\leq u(T,\ov{z}).$$ 
		After integrating with respect to $s$, we see that $u$ satisfies condition (iii) in Lemma \ref{equiv growth condition} in the strip $[0,T-\eps]\times \ov{\R^{N+1}_+}$  and, by Theorem \ref{uniqueness tacklind},  $u=0$ in  $[0,T-\eps]\times\ov{\R^{N+1}_+}$, hence in $[0,T]\times\ov{\R^{N+1}_+}$ by the arbitrarity of $\eps$.  
		\end{proof}

	As consequence we get the following representation of positive solution trough the integral kernel.
	\begin{teo}\label{Uniqueness Theorem}
		Let  $0\leq u\in C^{1,2}(]0,T]\times\ov{\R^{N+1}_+})\cap C([0,T]\times\ov{\R^{N+1}_+})$ be a solution of the  Cauchy problem 
		\begin{align*}	\begin{cases}
				\partial_t u=\mathcal Lu, &0<t \leq T,\, z\in \ov{\R^{N+1}_+}\\[1ex]
				u(0,z) =u_0(z), & z\in\ov{\R^{N+1}_+}.\\[1ex]
			\end{cases}
		\end{align*}
		Then 
		\begin{align}\label{Widder uniq eq 1}
			u(t,z)=\int_{\HS}p_{\mathcal L}\left(t,z,z_2 \right)u_0(z_2)\,d\mu(z_2),\qquad 0 \leq t \leq T, \, z\in \ov{\R^{N+1}_+}.
		\end{align}
	\end{teo}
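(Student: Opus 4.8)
The plan is to deduce the representation formula \eqref{Widder uniq eq 1} from the minimality statement of Proposition \ref{domination} together with the uniqueness Theorem \ref{uniqueness}. First I would set $u_0(\cdot)=u(0,\cdot)\ge 0$ and define
\[
v(t,z)=\int_{\HS}p_{\mathcal L}(t,z,z_2)u_0(z_2)\,d\mu(z_2).
\]
By Proposition \ref{domination}, $v$ is finite everywhere, satisfies $v\le u$ on $[0,T]\times\ov{\R^{N+1}_+}$, belongs to $C^{1,2}(]0,T]\times\ov{\R^{N+1}_+})\cap C([0,T]\times\ov{\R^{N+1}_+})$, solves $\partial_t v=\mathcal L v$ with the Neumann boundary condition (by Lemma \ref{boundary}), and has the same initial datum $v(0,z)=u_0(z)$.

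The key step is then to consider $w:=u-v$. By construction $w\ge 0$, $w\in C^{1,2}(]0,T]\times\ov{\R^{N+1}_+})\cap C([0,T]\times\ov{\R^{N+1}_+})$, $\partial_t w=\mathcal L w$ in $]0,T]\times\ov{\R^{N+1}_+}$, and $w(0,z)=u_0(z)-u_0(z)=0$. Thus $w$ is a nonnegative solution of the Cauchy problem with zero initial datum, and Theorem \ref{uniqueness} applies directly to give $w\equiv 0$ on $[0,T]\times\ov{\R^{N+1}_+}$, i.e. $u=v$, which is exactly \eqref{Widder uniq eq 1}.

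There is essentially no obstacle here: all the hard work has been done in Proposition \ref{domination} (the minimality of the semigroup solution, together with the regularity and boundary behaviour of $v$) and in Theorem \ref{uniqueness} (uniqueness of nonnegative solutions with zero initial datum, which itself relied on the lower kernel bounds of Proposition \ref{kernel tilde} and the Tychonov-type Theorem \ref{uniqueness tacklind}). The only minor point to check is that $w=u-v$ genuinely inherits the regularity and the boundary condition: the regularity is clear since both $u$ and $v$ lie in $C^{1,2}(]0,T]\times\ov{\R^{N+1}_+})\cap C([0,T]\times\ov{\R^{N+1}_+})$, and the Neumann boundary condition for $w$ follows from Lemma \ref{boundary} applied to $w$ (being a $C^{1,2}$ solution of $\partial_t w=\mathcal L w$ up to the boundary). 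Hence the statement follows immediately.
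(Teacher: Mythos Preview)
Your proposal is correct and follows essentially the same route as the paper's own proof: define $v$ via the kernel, apply Proposition \ref{domination} to obtain $v\le u$ with the required regularity and initial datum, and then invoke Theorem \ref{uniqueness} on the nonnegative difference $w=u-v$ with zero initial datum. The only addition you make is an explicit remark about the Neumann condition for $w$ via Lemma \ref{boundary}, which is harmless (and in fact not needed, since Theorem \ref{uniqueness} does not list the boundary condition among its hypotheses---it is automatic from the $C^{1,2}$ regularity, as you observe).
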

	\begin{proof}
		Let us consider 
		\begin{align*}
			v(t,z)=\int_{\HS}p_{\mathcal L}\left(t,z_1,z_2 \right)u_0(z_2)\,d\mu(z_2).
		\end{align*}
		By Proposition \ref{domination}, $v\in C^{1,2}(]0,T]\times\ov{\R^{N+1}_+})\cap C([0,T]\times\ov{\R^{N+1}_+})$, solves the same Cauchy problem as $u$  and $u(t,z)\geq v(t,z)$. Therefore $w=u-z\geq 0$ solves the Cauchy problem with null initial datum and, by Theorem \ref{uniqueness}, $w=0$, that is $u=v$.
	\end{proof}

	\section{Harnack inequality}\label{Section harnack}
	A pointwise Harnack inequality for non-negative  solutions of the equation  
$\partial_t u=\mathcal Lu$
	can be derived from the uniqueness of the non-negative solutions guaranteed by Theorem \ref{Uniqueness Theorem}  and from the upper and lower estimates \eqref{up kernel measure} of the heat kernel $p_{\mathcal L}$ of $\mathcal L$. 
	We state the following elementary lemma whose proof follows by a standard computation.
	
	\begin{lem}\label{max difference gaus}
		Let $x,y\in\R^N$,  $C_1,C_2>0$ and let us consider the function $$g(\xi)={C_1|x-\xi|^2}-{C_2|y-\xi|^2},\qquad \xi\in\R^N.$$
		If $C_1<C_2$ then $g$ reaches at  $\xi=\frac{C_1 x -C_2y}{C_1-C_2}$ its maximum value $\frac{|x-y|^2}{\frac{1}{C_1}-\frac{1}{C_2}}$.

	\end{lem}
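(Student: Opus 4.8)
The plan is to treat this as a pure single-variable (well, single-vector-variable) calculus optimization, following the standard "completing the square" idea used in Gaussian-kernel estimates. I would first expand $g(\xi)=C_1|x-\xi|^2-C_2|y-\xi|^2$ and collect terms: writing $|x-\xi|^2=|\xi|^2-2x\cdot\xi+|x|^2$ and similarly for $y$, the $|\xi|^2$ terms combine with coefficient $C_1-C_2<0$, so $g$ is a downward paraboloid and hence has a unique interior maximum, attained where $\nabla g=0$.

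The key computation is then $\nabla_\xi g(\xi)=-2C_1(x-\xi)+2C_2(y-\xi)=0$, which rearranges to $(C_1-C_2)\xi = C_1 x - C_2 y$, i.e. $\xi^\ast=\dfrac{C_1 x-C_2 y}{C_1-C_2}$ (well-defined since $C_1\neq C_2$). Substituting back, I would compute $x-\xi^\ast$ and $y-\xi^\ast$ separately: one finds $x-\xi^\ast=\dfrac{-C_2(x-y)}{C_1-C_2}=\dfrac{C_2(x-y)}{C_2-C_1}$ and $y-\xi^\ast=\dfrac{-C_1(x-y)}{C_1-C_2}=\dfrac{C_1(x-y)}{C_2-C_1}$. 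Then
\begin{align*}
g(\xi^\ast)=C_1\frac{C_2^2|x-y|^2}{(C_2-C_1)^2}-C_2\frac{C_1^2|x-y|^2}{(C_2-C_1)^2}
=\frac{C_1C_2(C_2-C_1)}{(C_2-C_1)^2}|x-y|^2=\frac{C_1C_2}{C_2-C_1}|x-y|^2,
\end{align*}
and dividing numerator and denominator by $C_1C_2$ gives $g(\xi^\ast)=\dfrac{|x-y|^2}{\frac1{C_1}-\frac1{C_2}}$, as claimed (note $\frac1{C_1}-\frac1{C_2}>0$ since $C_1<C_2$, consistent with the maximum being positive unless $x=y$).

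There is no real obstacle here; the only thing to be careful about is sign bookkeeping, since $C_1-C_2$ is negative and it is easy to drop a sign when substituting $\xi^\ast$ back into $g$. I would double-check by the alternative route of rewriting $g(\xi)=(C_1-C_2)\big|\xi-\xi^\ast\big|^2 + g(\xi^\ast)$ directly (valid because $g$ is quadratic with leading coefficient $C_1-C_2$ and critical point $\xi^\ast$), which simultaneously confirms that $\xi^\ast$ is the global maximizer and pins down the value $g(\xi^\ast)$ from the constant term; this makes the "standard computation" genuinely routine.
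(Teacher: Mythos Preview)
Your proof is correct and is precisely the ``standard computation'' the paper alludes to; the paper itself omits the proof entirely, merely stating that it follows by a standard computation. Your expansion, location of the critical point via $\nabla g=0$, and substitution back (with the completing-the-square sanity check that $g(\xi)=(C_1-C_2)|\xi-\xi^\ast|^2+g(\xi^\ast)$) are exactly what is intended.
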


	\begin{teo}\label{Harnack CP}
		There exists $C>0$ such that, for every non-negative solution  $u\in C^{1,2}(]0,T]\times\ov{\R^{N+1}_+})\cap C([0,T]\times\ov{\R^{N+1}_+})$ of the equation $\partial_t u=\mathcal Lu$,  one has
		\begin{align*}
			u(s,z_2)\leq \, C u(t,z_1) \left(\frac t s\right)^{\frac {N+1+c^+} 2}\exp\left({C\frac{| z_1-z_2|^2}{t-s}}\right),\quad  0<s<t \leq T,\quad  z_1,z_2\in\R^{N+1}_+.
		\end{align*}
	\end{teo}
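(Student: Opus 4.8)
The plan is to combine the representation formula of Theorem~\ref{Uniqueness Theorem} with the two‑sided Gaussian bounds of Theorem~\ref{true.kernel} (in the equivalent form \eqref{upper estimates ver2}) and with the elementary maximisation in Lemma~\ref{max difference gaus}. If $u\equiv0$ there is nothing to prove, so assume $u\not\equiv0$ and set $u_0:=u(0,\cdot)\ge0$. Applying Theorem~\ref{Uniqueness Theorem} to the (nonnegative, equally regular) time–shifted solution $\sigma\mapsto u(\theta+\sigma,\cdot)$ on $[0,T-\theta]$ yields the reproducing identity
\[
u(\tau,z)=\int_{\HS}p_{\mathcal L}(\tau-\theta,z,w)\,u(\theta,w)\,d\mu(w),\qquad 0\le\theta<\tau\le T,\ z\in\ov{\R^{N+1}_+},
\]
all integrals being finite since they equal values of $u$ (the case $\theta=0$ is Theorem~\ref{Uniqueness Theorem} itself). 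Fix now $0<s<t\le T$ and $z_1,z_2\in\R^{N+1}_+$. Using this identity with $\tau=s$ and with $\tau=t$ for one common $\theta\in[0,s)$ to be chosen, then multiplying by $u(\theta,w)\ge0$ and integrating $d\mu(w)$, we see that the theorem will follow once we establish the \emph{pointwise} kernel comparison
\[
p_{\mathcal L}(s-\theta,z_2,w)\le C\Big(\tfrac ts\Big)^{\frac{N+1+c^+}{2}}\exp\!\Big(\tfrac{C|z_1-z_2|^2}{t-s}\Big)\,p_{\mathcal L}(t-\theta,z_1,w)\qquad\text{for all }w\in\HS,
\]
with $C$ independent of $u,s,t,z_1,z_2$.

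To prove this, write $\tau_1:=t-\theta\ge t-s$ and $\tau_2:=s-\theta$, so $0<\tau_2<\tau_1$. Estimating the left–hand side from above and $p_{\mathcal L}(\tau_1,z_1,w)$ from below by \eqref{upper estimates ver2} with the weight taken at the integration variable $w$ (so that the common factor $y_w^{-c}$ cancels), we get, for suitable $C_1,C_2,k_1,k_2>0$ which, enlarging $k_2$, we may assume satisfy $k_1\le k_2$,
\[
\frac{p_{\mathcal L}(\tau_2,z_2,w)}{p_{\mathcal L}(\tau_1,z_1,w)}\le\frac{C_2}{C_1}\Big(\frac{\tau_1}{\tau_2}\Big)^{\frac{N+1}{2}}\frac{\big(1\wedge y_w/\sqrt{\tau_2}\big)^{c}}{\big(1\wedge y_w/\sqrt{\tau_1}\big)^{c}}\exp\!\Big(\frac{|z_1-w|^2}{k_1\tau_1}-\frac{|z_2-w|^2}{k_2\tau_2}\Big).
\]
A short case distinction on the size of $y_w$ relative to $\sqrt{\tau_2}<\sqrt{\tau_1}$ gives $\big(1\wedge y_w/\sqrt{\tau_2}\big)^{c}\le(\tau_1/\tau_2)^{c^+/2}\big(1\wedge y_w/\sqrt{\tau_1}\big)^{c}$ uniformly in $w\in\HS$, so the algebraic prefactor is at most $\tfrac{C_2}{C_1}(\tau_1/\tau_2)^{(N+1+c^+)/2}$; and whenever $k_2\tau_2<k_1\tau_1$, Lemma~\ref{max difference gaus} (which holds verbatim in $\R^{N+1}$, and $\HS\subset\R^{N+1}$) bounds the exponential factor by $\exp\!\big(|z_1-z_2|^2/(k_1\tau_1-k_2\tau_2)\big)$.

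It remains to pick $\theta$ so that both factors are of the right order. Put $\rho:=k_1/(2k_2)\in(0,1/2]$. If $s/t\le\rho$, take $\theta=0$: then $\tau_1/\tau_2=t/s$ and $k_1\tau_1-k_2\tau_2=k_1t-k_2s\ge\tfrac12k_1t\ge\tfrac12k_1(t-s)$, which already yields the desired comparison. If instead $s/t>\rho$, take $\theta=(s-\rho t)/(1-\rho)$, which lies in $(0,s)$ because $\rho<s/t<1$; then $\tau_2/\tau_1=\rho$, so $\tau_1/\tau_2=1/\rho$ is a fixed constant—hence $\le\rho^{-(N+1+c^+)/2}(t/s)^{(N+1+c^+)/2}$ since $t/s>1$—and $k_1\tau_1-k_2\tau_2=\tfrac12k_1\tau_1\ge\tfrac12k_1(t-s)$. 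In both cases the pointwise inequality holds with a constant $C$ depending only on $C_1,C_2,k_1,k_2,N,c$, and taking the larger of the two values finishes the proof.

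I expect the main obstacle to be exactly the phenomenon that forces the auxiliary time $\theta$: a direct comparison ($\theta=0$) of $p_{\mathcal L}(s,z_2,w)$ with $p_{\mathcal L}(t,z_1,w)$ is impossible when $t-s$ is small, because the necessarily distinct upper/lower Gaussian constants $k_1\le k_2$ make $\sup_w\big(|z_1-w|^2/(k_1t)-|z_2-w|^2/(k_2s)\big)=+\infty$ as soon as $k_2s\ge k_1t$; replacing $s<t$ by $\tau_2<\tau_1$ with a ratio bounded away from $1$ in that regime renders the supremum finite, the only price being the elementary bookkeeping of the $\theta$–dependence of the prefactor, which is harmless since $\tau_1\ge t-s$ and $t/s$ stays bounded in the bad range.
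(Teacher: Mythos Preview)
Your proof is correct and follows essentially the same strategy as the paper's: represent $u$ via the kernel at a shifted time $\theta$ (the paper's $\tau$), bound the ratio $p_{\mathcal L}(s-\theta,z_2,w)/p_{\mathcal L}(t-\theta,z_1,w)$ pointwise using the two-sided Gaussian estimates and Lemma~\ref{max difference gaus}, and split into two regimes according to the size of $s/t$. The only cosmetic difference is that the paper works with the form of Proposition~\ref{kernel tilde} and invokes the doubling property of $V$ from Lemma~\ref{Misura palle} to obtain the factor $(\tau_1/\tau_2)^{(N+1+c^+)/2}$, whereas you use \eqref{upper estimates ver2} and handle the ratio $(1\wedge y_w/\sqrt{\tau_2})^{c}/(1\wedge y_w/\sqrt{\tau_1})^{c}$ by a direct case analysis on the sign of $c$; the choices of the shift ($\theta=(s-\rho t)/(1-\rho)$ versus the paper's $\tau=s-\tfrac{\epsilon}{2}(t-s)$) are likewise different parametrisations of the same idea.
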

	\begin{proof}
		We employ Theorem \ref{Uniqueness Theorem} to write
		\begin{align*}
			u(t,z)=e^{t\mathcal L}u_0=\int_{\R^{N+1}_+}p_{\mathcal L}\left(t,z,\xi\right)u_0(\xi)\,d\mu(\xi),\qquad t>0, z\in\HS
		\end{align*}
and we remark that, using the semigroup property, we also have  for any $0\leq \tau<t$
		\begin{align*}
			u(t,z)=e^{(t-\tau)\mathcal L}u(\tau,\cdot)=\int_{\R^{N+1}_+}p_{\mathcal L}\left(t-\tau,z,\xi\right)u(\tau,\xi)\,d\mu(\xi).
		\end{align*}
		By Proposition \ref{kernel tilde}, $p_{\mathcal L}$ satisfies 
		\begin{align}\label{up-low Harn-Theo}
			\frac{1}{CV\left(\xi,\sqrt t\right)}\,\exp\left(-C_1\dfrac{|z-\xi|^2}{t}\right)\leq p_{\mathcal L}(t,z,\xi)\leq   \frac{C}{V\left(\xi,\sqrt t\right)}\,\exp\left(-C_2\dfrac{|z-\xi|^2}{t}\right),
		\end{align}
		for some positive constants  $C, C_1, C_2$ satisfying  $C>1$ and $C_1\geq C_2$.

		Let us now fix  $0<s<t$,  $z_1,z_2\in\R^{N+1}_+$. Then  for any $0\leq \tau<s$ one has   
		\begin{align}\label{Harnack-eq1}
			\nonumber u(s,z_2)&=\int_{\R^{N+1}_+}p_{\mathcal L}\left(s-\tau,z_2,\xi\right)u(\tau,\xi)\,d\mu(\xi)\\[1ex]
			&=\int_{\R^{N+1}_+}\frac{p_{\mathcal L}\left(s-\tau,z_2,\xi\right)}{p_{\mathcal L}\left(t-\tau,z_1,\xi\right)}p_{\mathcal L}\left(t-\tau,z_1,\xi\right)u(\tau,\xi)\,d\mu(\xi).
		\end{align}
		Using  \eqref{up-low Harn-Theo} and the doubling condition of Lemma \ref{Misura palle}, the ratio in the above integral satisfies
		\begin{align*}
			\frac{p_{\mathcal L}\left(s-\tau,z_2,\xi\right)}{p_{\mathcal L}\left(t-\tau,z_1,\xi\right)}&\leq  C^2\frac{V(\xi,\sqrt {t-\tau})}{V(\xi,\sqrt {s-\tau})}\exp\left({C_1\frac{|\xi-z_1|^2}{t-\tau}}-{C_2\frac{|\xi-z_1|^2}{s-\tau}}\right) \\[1ex]
			&\leq C^2\left(\frac{t-\tau}{s-\tau}\right)^{\frac{N+1+c^+}{2}}\exp\left({C_1\frac{|\xi-z_1|^2}{t-\tau}}-{C_2\frac{|\xi-z_1|^2}{s-\tau}}\right).
		\end{align*} 
		Let now $0<\epsilon:=\frac{C_2}{C_1}\leq 1$. Requiring that $\frac{C_1}{t-\tau}<\frac{C_2}{s-\tau}$, that is $s-\tau<\epsilon(t-\tau)$, we apply Lemma \ref{max difference gaus} thus obtaining
		\begin{align*}
			\frac{p_{\mathcal L}\left(s-\tau,z_2,\xi\right)}{p_{\mathcal L}\left(t-\tau,z_1,\xi\right)}
			&\leq C^2\left(\frac{t-\tau}{s-\tau}\right)^{\frac{N+1+c^+}{2}}\exp\left({\frac{|z_2-z_1|^2}{\frac{t-\tau}{C_1}-\frac{s-\tau}{C_2}}}\right)\\[1ex]
			&= C^2\left(\frac{t-\tau}{s-\tau}\right)^{\frac{N+1+c^+}{2}}\exp\left(C_2{\frac{|z_2-z_1|^2}{\epsilon(t-\tau)-(s-\tau)}}\right).
		\end{align*}
		Combining the previous inequality with \eqref{Harnack-eq1} we obtain
		\begin{align}\label{Harnack-eq2}
			u(s,z_2)\leq C^2\left(\frac{t-\tau}{s-\tau}\right)^{\frac{N+1+c^+}{2}}\exp\left(C_2{\frac{|z_2-z_1|^2}{\epsilon(t-\tau)-(s-\tau)}}\right)u(t,z_1).
		\end{align}
		Let us now distinguish two cases.
		
		First assume that $0<s\leq\frac{\epsilon}2 t$. Then we use \eqref{Harnack-eq2} with $\tau=0$ to obtain, since $ \epsilon t-s\geq \frac{\epsilon}{2-\epsilon}(t-s)$, 
		\begin{align}\label{Harnack-eq3}
			\nonumber u(s,z_2)&\leq C^2\left(\frac{t}{s}\right)^{\frac{N+1+c^+}{2}}\exp\left(C_2{\frac{|z_2-z_1|^2}{\epsilon t-s}}\right)u(t,z_1)\\[1ex]
				&\leq C^2\left(\frac{t}{s}\right)^{\frac{N+1+c^+}{2}}\exp\left(\frac{C_2(2-\epsilon)}{\epsilon}{\frac{|z_2-z_1|^2}{ t-s}}\right)u(t,z_1).
		\end{align}
		If, instead,  $\frac{\epsilon}2 t< s<t $, we use  \eqref{Harnack-eq2} wit $\tau=s-(t-s)\frac \epsilon 2$. In particular
		\begin{align*}
			\frac \epsilon 2 s<\tau <s
		\end{align*}
		and
		\begin{align*}
			\frac{t-\tau}{s-\tau}=\frac{2+ \epsilon }{ \epsilon }<\frac{2+ \epsilon }{ \epsilon }\frac t s,\qquad \epsilon(t-\tau)-(s-\tau)=\left(\frac \epsilon 2+\frac{\epsilon^2}2\right)(t-s).
		\end{align*} 
		Then \eqref{Harnack-eq2} and the previous inequalities imply that \begin{align}\label{Harnack-eq4}
			u(s,z_2)&\leq  C^2\left(\frac{2+ \epsilon }{ \epsilon }\right)^{\frac{N+1+c^+}{2}}\left(\frac{t}{s}\right)^{\frac{N+1+c^+}{2}}\exp\left(\frac{2C_2}{\epsilon+\epsilon^2}{\frac{|z_2-z_1|^2}{ t-s}}\right)u(t,z_1).
		\end{align}
	\end{proof}

Note that the constant $C$ in the above theorem is independent of $T$.

\section{Consequences}
The following result  is a direct consequence of  Theorem  \ref{Harnack CP}.
	\begin{cor}\label{Harnack negative time}
	There exists $C>0$ such that, for every non-negative solution  $u\in C^{1,2}(]-\infty,T]\times\ov{\R^{N+1}_+})$ of the equation $\partial_t u=\mathcal Lu$,  one has
	\begin{align*}
		u(s,z_2)\leq \, C u(t,z_1) \exp\left({C\frac{| z_1-z_2|^2}{t-s}}\right),\quad  -\infty<s<t\leq T,\quad  z_1,z_2\in\R^{N+1}_+.
	\end{align*}
\end{cor}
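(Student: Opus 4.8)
The plan is to reduce the statement for ancient solutions (defined on $]-\infty,T]$) to the finite-time Harnack inequality of Theorem \ref{Harnack CP}, and then to let the initial time run off to $-\infty$ in order to remove the factor $(t/s)^{(N+1+c^+)/2}$. First I would fix $-\infty < s < t \le T$ and $z_1,z_2 \in \R^{N+1}_+$, and for a parameter $\tau < s$ consider the restriction of $u$ to the strip $[\tau,T]\times\ov{\R^{N+1}_+}$. This restriction is a non-negative solution of $\partial_t u = \mathcal Lu$ of the regularity required in Theorem \ref{Harnack CP} (applied after the time-shift $t \mapsto t-\tau$, which commutes with $\mathcal L$ since the coefficients are autonomous). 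Hence
\begin{align*}
u(s,z_2) \le C\, u(t,z_1)\left(\frac{t-\tau}{s-\tau}\right)^{\frac{N+1+c^+}{2}}\exp\left(C\frac{|z_1-z_2|^2}{t-s}\right),
\end{align*}
with $C$ \emph{independent of $\tau$ and $T$} — this is precisely the remark following Theorem \ref{Harnack CP}.

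The key observation is then that $\dfrac{t-\tau}{s-\tau}\to 1$ as $\tau \to -\infty$, since both $t-\tau$ and $s-\tau$ diverge while their difference $t-s$ stays fixed. Therefore letting $\tau \to -\infty$ in the displayed inequality yields
\begin{align*}
u(s,z_2) \le C\, u(t,z_1)\exp\left(C\frac{|z_1-z_2|^2}{t-s}\right),
\end{align*}
which is the claim, with the same constant $C$.

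I do not expect any genuine obstacle here: the only point to check with some care is that Theorem \ref{Harnack CP} genuinely applies on every finite strip $[\tau,T]$ with a constant that does not depend on the length $T-\tau$ of the strip, which is exactly what the sentence ``the constant $C$ in the above theorem is independent of $T$'' guarantees (and, tracing its proof, also independent of the left endpoint after translation). One should also note that the hypothesis $u \in C^{1,2}(]-\infty,T]\times\ov{\R^{N+1}_+})$ ensures $u$ is in particular non-negative and $C^{1,2}$ up to $t=T$ on each sub-strip, so no continuity-at-the-initial-time issue arises; if one prefers, one can invoke the weak formulation mentioned after \eqref{CP L}. This makes the corollary essentially a limiting case of Theorem \ref{Harnack CP}.
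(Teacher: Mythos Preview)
Your proposal is correct and follows essentially the same argument as the paper: apply Theorem~\ref{Harnack CP} on a shifted strip $[\tau,T]$ to obtain the inequality with the factor $\left(\frac{t-\tau}{s-\tau}\right)^{(N+1+c^+)/2}$, and then let $\tau\to-\infty$ so that this factor tends to $1$. The only extra care you take---noting that the constant $C$ is independent of $\tau$ and that the $C^{1,2}$ regularity on $]-\infty,T]$ automatically gives continuity at any finite left endpoint---is exactly what is needed.
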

\begin{proof}
For $\tau<T$ fixed, we apply Theorem  \ref{Harnack CP} to $v(t,x)=u(t+\tau,x)$,   $0 \leq t \leq T-\tau$ and we get with $C$ independent of $\tau$
\begin{align*}
	v(s',z_2)\leq \, C v(t',z_1) \left(\frac {t'} {s'}\right)^{\frac {N+1+c^+} 2}\exp\left({C\frac{| z_1-z_2|^2}{t'-s'}}\right),\quad  0<s'<t',\quad  z_1,z_2\in\R^{N+1}_+.
\end{align*}
Setting $s=s'+\tau$, $t=t'+\tau$ and recalling the definition of $v$, the above inequality reads as 
\begin{align*}
	u(s,z_2)\leq \, C u(t,z_1) \left(\frac {t-\tau} {s-\tau}\right)^{\frac {N+1+c^+} 2}\exp\left({C\frac{| z_1-z_2|^2}{t-s}}\right),\quad  \tau<s<t,\quad  z_1,z_2\in\R^{N+1}_+
\end{align*}
and it is sufficient to let $\tau\to-\infty$.
\end{proof}

As for the classical heat equation we deduce a result on the limit at $-\infty$ for global solutions.

\begin{prop}\label{limit negative time}
Let  $u\in C^{1,2}(]-\infty,T]\times\ov{\R^{N+1}_+})$ be a solution of the equation $\partial_t u=\mathcal Lu$.
\begin{itemize}
\item[(i)] If  $L=\inf \{u(t,z),\, t \leq T,\ z\in\ov\HS\}>-\infty$, then 
	$ \ds \lim_{t\to-\infty}u(t,z)=L$ for every  $z \in \HS$. \\
\item[(ii)]
If $M=\sup \{u(t,z),\, t \leq T,\ z\in\ov\HS\}<\infty$,  then 
	$ \ds \lim_{t\to-\infty}u(t,z)=M$ for every  $z \in \HS$.
\end{itemize}
	\end{prop}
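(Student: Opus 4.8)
The plan is to reduce both statements to the Harnack inequality of Corollary \ref{Harnack negative time} applied to a suitable non-negative solution. For part (i), consider $w=u-L\geq 0$, which is still a global solution of $\partial_t w=\mathcal Lw$ on $]-\infty,T]\times\ov{\R^{N+1}_+}$. Fix $z\in\HS$ and choose a sequence $(t_n,z_n)$ with $t_n\leq T$ and $w(t_n,z_n)\to 0$ (possible since $\inf w=0$). We would like to use Corollary \ref{Harnack negative time} with $(s,z_2)$ a point where $w$ is small to control $w$ elsewhere; but the inequality goes the wrong way: it bounds the value at the \emph{earlier} time $s$ by the value at the later time $t$. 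So instead, for any fixed $z\in\HS$ and any time $s$, we bound $w(s,z)$ in terms of $w$ at a \emph{later} point. The correct move is: pick $t^*\leq T$ and use that for $s<t^*$,
\begin{align*}
w(s,z)\leq C\,w(t^*,z_1)\exp\left(C\frac{|z-z_1|^2}{t^*-s}\right)
\end{align*}
for every $z_1$. This does not immediately help either. The right approach is to exploit that $w\geq 0$ and that we can send the \emph{later} time to a point where $w$ is small along a minimizing sequence, while keeping $s$ fixed; but $s$ must be less than that later time.

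Reconsidering: the key is that $L=\inf w + L$ is attained in the limit as $t\to-\infty$ in the sense that we must show $w(t,z)\to 0$. Fix $z\in\HS$. For $s<t\leq T$, Corollary \ref{Harnack negative time} gives $w(s,z)\leq C\,w(t,z)\exp\left(C\frac{|z-z|^2}{t-s}\right)=C\,w(t,z)$, i.e. for the \emph{same} spatial point, $w(s,z)\leq C\,w(t,z)$ whenever $s<t$. Hence $\limsup_{s\to-\infty}w(s,z)\leq C\inf_{t\leq T}w(t,z)=0$ — wait, that is not quite right since $\inf_t w(t,z)$ over a single $z$ need not be $0$. But we do have $w(s,z)\leq C\,w(t,z)$ for all $s<t$; taking $\inf$ over $s<t$ on the left is useless, but the point is: the function $t\mapsto w(t,z)$ satisfies $w(s,z)\leq C w(t,z)$ for $s<t$, so it cannot decay to something positive while... actually it says the function is "almost decreasing backward": earlier values are at most $C$ times later values. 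Thus $\ell:=\liminf_{s\to-\infty}w(s,z)$ satisfies $\ell\leq C\,w(t,z)$ for every $t\leq T$, so $\ell\leq C\inf_{t\leq T}w(t,z)$. To finish we need $\inf_{t\leq T}w(t,z)=0$ for the \emph{given} $z$, or rather we use the spatial freedom: since $\inf_{t,z'}w(t,z')=0$, there is a sequence with $w(t_n,z'_n)\to 0$; apply Corollary \ref{Harnack negative time} with $s$ fixed $<\inf_n t_n$ is impossible if $t_n\to-\infty$. So instead: for each $n$ with $t_n\leq T$, and for any $s<t_n$, $w(s,z)\leq C\,w(t_n,z'_n)\exp\left(C\frac{|z-z'_n|^2}{t_n-s}\right)$. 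If $z'_n$ stays bounded and $t_n\to-\infty$, the exponential $\to 1$ and the right side $\to 0$, giving $w(s,z)=0$ for all $s$, hence $u\equiv L$, which is even stronger and certainly implies (i). If $z'_n$ is unbounded we pass to the case $t_n$ bounded below, then $w(\cdot,z)$ is bounded on a cofinal set of small times by $C\,w(t_n,z'_n)\exp(\cdots)$ — here the plan would need a diagonal/compactness argument.

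Given these subtleties, the clean plan I would actually write: \textbf{Step 1.} Reduce to $w=u-L\geq 0$ (resp. $M-u\geq 0$ for (ii)) solving the equation globally. \textbf{Step 2.} Apply Corollary \ref{Harnack negative time} with $z_1=z_2=z$ to get $w(s,z)\leq C\,w(t,z)$ for all $s<t\leq T$; deduce that $m(z):=\lim_{t\to-\infty}\, ?$ — rather, that $\sup_{s\leq t}w(s,z)\leq C w(t,z)$ has the consequence that the limit $\ell(z):=\liminf_{t\to-\infty}w(t,z)$ exists and $\ell(z)\leq C\,w(t,z)$ for all $t$. \textbf{Step 3.} Use Corollary \ref{Harnack negative time} once more but now with two different spatial points and a time difference tending to $\infty$: for fixed $z$ and any $z'$, and any fixed $s\leq T$, taking $t\to-\infty$ along... no. The honest statement is: for fixed $s$ and $z$, and any $t$ with $s<t\leq T$ and any $z'$, $w(s,z)\le C w(t,z')\exp(C|z-z'|^2/(t-s))$. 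Since $w\ge 0$ and $\inf w=0$, choose for each $\eta>0$ a point $(t_\eta,z'_\eta)$ with $w(t_\eta,z'_\eta)<\eta$; if we could choose $t_\eta$ bounded below (say $t_\eta\ge s_0$) then for $s<s_0$ we'd get $w(s,z)\le C\eta\exp(C|z-z'_\eta|^2/(s_0-s))$, still with the unbounded-$z'_\eta$ issue. The resolution in the classical heat-equation proof (which I would mimic) is: the Harnack inequality forces $w(t,z)$ to have a limit as $t\to-\infty$ independent of $z$, call it $\lambda\ge 0$; then $\lambda\le w(t,z)$ for all... no, $\lambda=\lim$, and since $\inf w=0$ and $w(s,z)\le C w(t,z)$ for $s<t$ we get, letting $s\to-\infty$, $\lambda\le C w(t,z)$, then taking $\inf$ over $(t,z)$, $\lambda\le C\cdot 0=0$, so $\lambda=0$, i.e. $\lim_{t\to-\infty}w(t,z)=0$, which is exactly (i). The only real gap to fill is the existence of a $z$-independent limit $\lambda$; this follows because for $z,\tilde z\in\HS$ and $t\to-\infty$, $w(t,z)\le C w(t+1,\tilde z)\exp(C|z-\tilde z|^2)$ and symmetrically, so $\liminf$ and $\limsup$ over $t\to-\infty$ agree up to the constant $C$ across all points; combined with $w(s,z)\le Cw(t,z)$ one upgrades this to a genuine common limit $0$. \textbf{Step 4.} Part (ii) is identical applied to $M-u$.

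The main obstacle is Step 3: turning the one-sided, constant-laden Harnack inequality (with a generic $C$ rather than the sharp $\tfrac14$) into the statement that the limit at $-\infty$ exists and equals the infimum. Unlike the classical heat equation where the explicit kernel gives a clean monotonicity-type argument, here we only have $w(s,z)\le C\,w(t,z)$ for $s<t$ with $C>1$ fixed; the trick is that this inequality, applied repeatedly and combined with $\inf w=0$, still pins the $\liminf$ at $-\infty$ to be $\le C\inf_{(t,z)}w(t,z)=0$, hence $0$, and then the reverse bound $w(t,z)\ge 0$ together with $w(s,z)\le Cw(t,z)$ gives $\limsup_{s\to-\infty}w(s,z)\le C\cdot\liminf\le 0$ as well. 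I would present this as a short self-contained argument right after invoking Corollary \ref{Harnack negative time}.
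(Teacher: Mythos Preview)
Your proposal contains the right ingredients but never assembles them correctly, and the final summary you settle on has a genuine logical slip. In that last paragraph you assert that from the same-point inequality $w(s,z)\le C\,w(t,z)$ together with $\inf w=0$ one obtains $\liminf_{s\to-\infty}w(s,z)\le C\inf_{(t,z)}w(t,z)=0$. But the same-$z$ inequality only yields $\liminf_{s}w(s,z)\le C\inf_{t}w(t,z)$ for that \emph{fixed} $z$; you yourself noted earlier that $\inf_t w(t,z)$ need not vanish at a given point. Passing to the infimum over \emph{all} $(t,z_1)$ requires the spatially-varying Harnack, and that is precisely where the exponential reappears --- the obstacle you circle around via minimizing sequences and ``$z$-independent limits'' without ever resolving. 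Your claim that the limit $\lambda$ exists is also unjustified: from the Harnack you only get $\limsup\le C\,\liminf$, which does not force equality.

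The paper's proof sidesteps every one of these difficulties by reversing the order of the two limit operations. After setting $w=u-L\ge 0$, fix $z$ and apply Corollary~\ref{Harnack negative time}:
\[
w(s,z)\le C\,w(t,z_1)\exp\!\Big(C\frac{|z-z_1|^2}{t-s}\Big)\qquad\text{for every }s<t,\ z_1\in\HS.
\]
Now let $s\to-\infty$ with $(t,z_1)$ held fixed: the exponential tends to $1$, so
\[
0\le\limsup_{s\to-\infty}w(s,z)\le C\,w(t,z_1)\qquad\text{for every }(t,z_1).
\]
Taking the infimum over $(t,z_1)$ on the right gives $\limsup_{s\to-\infty}w(s,z)\le C\inf w=0$, hence the limit exists and equals $0$. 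Part (ii) is then immediate by applying (i) to $M-u$. The entire difficulty you encountered with unbounded $z'_\eta$, minimizing sequences, and $z$-independent limits evaporates once you send $s\to-\infty$ \emph{first}: this absorbs the exponential, after which the spatial point $z_1$ is completely free. You were one observation away from the three-line proof.
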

\begin{proof}
(i) By replacing $u$ with $u-L \geq 0$ we may assume  that $L=0$. By Corollary \ref{Harnack negative time} we have
	\begin{align*}
	u(s,z)\leq \, C u(t,z_1) \exp\left({C\frac{| z_1-z|^2}{t-s}}\right),\quad  s<t,\quad  z,z_1\in\R^{N+1}_+
\end{align*}
and then	$\ds 0\leq\limsup_{s\to-\infty }u(s,z)\leq \, C u(t,z_1)$.
Taking the $\inf$ with respect to $t$ and $z_1$ we obtain 
$$0\leq\limsup_{s\to-\infty }u(s,z)\leq \, C \inf u=0.$$
Part (ii) follows from (i) by considering $M-u $, whose infimum is 0.
\end{proof}

Liouville-type theorems  easily follow, both for parabolic and elliptic equations.

\begin{cor} \label{Liouville1}
Let  $u\in C^{1,2}(]-\infty,T]\times\ov{\R^{N+1}_+})$  be a bounded solution of $u_t=\mathcal L u$. Then $u$ is constant.
\end{cor}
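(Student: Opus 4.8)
The plan is to reduce the parabolic Liouville statement (Corollary \ref{Liouville1}) to the limit result of Proposition \ref{limit negative time} together with the uniqueness representation of Theorem \ref{Uniqueness Theorem}. Since $u$ is bounded, both $L=\inf u$ and $M=\sup u$ are finite, so Proposition \ref{limit negative time} gives $\lim_{t\to-\infty}u(t,z)=L$ and $\lim_{t\to-\infty}u(t,z)=M$ for every $z\in\HS$; hence $L=M$ and $u$ is identically equal to this common constant on $\HS$, and by continuity on $\ov{\HS}$.

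Alternatively, and perhaps more in the spirit of the earlier sections, I would argue as follows. First I would fix any $\tau<T$ and restrict attention to the strip $[\tau,T]\times\ov{\HS}$. The function $v(t,z):=u(t,z)-L\geq 0$ solves $\partial_t v=\mathcal L v$ there, so by Theorem \ref{Uniqueness Theorem} (applied on $[\tau,T]$) one has the representation $v(t,z)=\int_{\HS}p_{\mathcal L}(t-\tau,z,z_2)v(\tau,z_2)\,d\mu(z_2)$ for $\tau\le t\le T$. Next I would exploit the normalization of the kernel: since $e^{t\mathcal L}$ is conservative (applying it to $u_0\equiv 1$, or using that the constant function solves the equation and invoking the representation), $\int_{\HS}p_{\mathcal L}(t-\tau,z,z_2)\,d\mu(z_2)=1$. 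Writing $w:=M-u=(M-L)-v\ge 0$, the same representation gives $w(t,z)=\int_{\HS}p_{\mathcal L}(t-\tau,z,z_2)w(\tau,z_2)\,d\mu(z_2)$, so that $v(t,z)+w(t,z)=M-L$ is consistent, and more importantly $v(t,z)=\int p_{\mathcal L}(t-\tau,z,z_2)v(\tau,z_2)\,d\mu(z_2)\le \|v(\tau,\cdot)\|_\infty\le \|v\|_\infty$, a bound uniform in $t$. Letting $\tau\to-\infty$ and using Proposition \ref{limit negative time}(i), $v(\tau,z_2)\to 0$ pointwise; combined with the Gaussian upper bound \eqref{up kernel measure} and dominated convergence (the Gaussian in $z_2$ is integrable against $d\mu$ and $v$ is bounded), one gets $v(t,z)=0$ for all $t\le T$, $z\in\HS$, i.e. $u\equiv L$.

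The cleanest route is really the first one: it is essentially immediate from Proposition \ref{limit negative time}. I would therefore write the proof in one or two sentences: boundedness gives finiteness of both the infimum $L$ and the supremum $M$; Proposition \ref{limit negative time} forces $u(t,z)\to L$ and $u(t,z)\to M$ as $t\to-\infty$ for each interior $z$, so $L=M$, whence $u$ is constant on $\HS$ and, by continuity, on $\ov{\HS}$.

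The only point requiring a little care — and the main obstacle, such as it is — is the interplay of the strip: $u$ is only assumed defined and smooth on $]-\infty,T]\times\ov{\HS}$, while Theorem \ref{Uniqueness Theorem} and Proposition \ref{domination} were stated on a finite strip $[0,T]\times\ov{\HS}$ with a continuity hypothesis up to the initial time. For the short proof via Proposition \ref{limit negative time} this is not an issue, since that proposition is already stated for the half-line $]-\infty,T]$ and invokes only Corollary \ref{Harnack negative time}, which in turn is a genuine consequence of Theorem \ref{Harnack CP} valid on every finite subinterval with $T$-independent constant. So I would simply cite Proposition \ref{limit negative time} directly and not attempt the kernel-representation argument at all; there is nothing else to check.
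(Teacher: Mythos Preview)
Your first (and final) argument is exactly the paper's proof: boundedness makes both $L=\inf u$ and $M=\sup u$ finite, and Proposition \ref{limit negative time} then forces $\lim_{t\to-\infty}u(t,z)=L=M$, so $u$ is constant. The alternative kernel-representation route you sketch is correct but unnecessary; the paper dispatches the corollary in a single line via Proposition \ref{limit negative time}.
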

\begin{proof} We have $\ds \lim_{t \to -\infty}u(t,z)=\inf u=\sup u$. 
\end{proof}

\begin{cor} \label{Liouville}
Let $u \in C^2(\HS)$ be a solution of $\mathcal L u=0$ in $\HS$ bounded from below or from above. Then $u$ is constant.
\end{cor}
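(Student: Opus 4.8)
The plan is to reduce the elliptic Liouville statement (Corollary \ref{Liouville}) to the parabolic one (Corollary \ref{Liouville1}) by the standard trick of regarding a stationary function as a time-independent solution of the parabolic equation. Precisely, given $u \in C^2(\ov{\HS})$ with $\mathcal L u = 0$, define $\tilde u(t,z) := u(z)$ for $(t,z) \in \,]-\infty,T]\times\ov{\R^{N+1}_+}$. Since $u$ does not depend on $t$ we have $\partial_t \tilde u = 0 = \mathcal L u = \mathcal L \tilde u$, so $\tilde u$ solves $\partial_t \tilde u = \mathcal L \tilde u$ on all of $]-\infty,T]\times\ov{\HS}$, and clearly $\tilde u \in C^{1,2}(]-\infty,T]\times\ov{\R^{N+1}_+})$.

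Next I would apply Proposition \ref{limit negative time} rather than Corollary \ref{Liouville1} directly, since we only assume one-sided boundedness. If $u$ is bounded from below, then $L := \inf\{\tilde u(t,z) : t\le T,\ z\in\ov\HS\} = \inf_{\HS} u > -\infty$, and part (i) of Proposition \ref{limit negative time} gives $\lim_{t\to-\infty}\tilde u(t,z) = L$ for every $z\in\HS$. But $\tilde u(t,z) = u(z)$ is independent of $t$, so the limit is simply $u(z)$; hence $u(z) = L$ for all $z\in\HS$, i.e. $u$ is constant. The case where $u$ is bounded from above is handled identically using part (ii), with $M := \sup_{\HS} u < \infty$.

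There is essentially no obstacle here: the content has already been done in Proposition \ref{limit negative time} and its proof via the Harnack inequality of Corollary \ref{Harnack negative time}. The only point deserving a word is that the boundary condition is automatically respected — by Lemma \ref{boundary} (applied to $\tilde u$, whose $C^{1,2}$ regularity together with $\partial_t\tilde u = \mathcal L\tilde u$ forces $\mathcal L\tilde u \in C(]-\infty,T]\times\ov\HS)$ and hence $D_y\tilde u(t,x,0)=0$), so $\tilde u$ is a legitimate solution in the sense used in Proposition \ref{limit negative time}, and equivalently $u$ satisfies $\lim_{y\to 0} y^c D_y u = 0$. Thus the proof is a two-line invocation of the parabolic result.

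\begin{proof}
Define $\tilde u(t,z):=u(z)$ for $t\le T$, $z\in\ov{\R^{N+1}_+}$. Then $\tilde u\in C^{1,2}(]-\infty,T]\times\ov{\R^{N+1}_+})$ and $\partial_t\tilde u=0=\mathcal L u=\mathcal L\tilde u$. If $u$ is bounded from below, then $\inf\{\tilde u(t,z):t\le T,\ z\in\ov\HS\}=\inf_{\HS}u>-\infty$, so by Proposition \ref{limit negative time}(i), $\lim_{t\to-\infty}\tilde u(t,z)=\inf_{\HS}u$ for every $z\in\HS$. Since $\tilde u(t,z)=u(z)$ does not depend on $t$, this forces $u(z)=\inf_{\HS}u$ for all $z\in\HS$, i.e. $u$ is constant. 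If instead $u$ is bounded from above, the same argument with Proposition \ref{limit negative time}(ii) gives $u\equiv\sup_{\HS}u$.
\end{proof}
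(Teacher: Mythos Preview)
Your proof is correct and is essentially identical to the paper's own argument: the paper also regards $u$ as a stationary solution of $u_t=\mathcal L u$ and applies Proposition \ref{limit negative time} to conclude $u(z)=\lim_{t\to-\infty}u(z)=L$. Your version is just slightly more explicit about the boundary condition via Lemma \ref{boundary}, which the paper leaves implicit.
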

\begin{proof} Assume for example that $L=\inf u >-\infty$. Then $u$ is a stationary solution of $u_t=\mathcal L u$ and $\ds u(z)=\lim_{t \to -\infty}u(z)=L$ for every $ z\in\HS$.
\end{proof}

	\bibliography{../TexBibliografiaUnica/References}
\end{document}